\date{\today}
\newtheorem{theorem}{Theorem}[section]
\newtheorem{proposition}[theorem]{Proposition}%[section]
\newtheorem{corollary}[theorem]{Corollary}%[section]
\newtheorem{lemma}[theorem]{Lemma}
\theoremstyle{definition}
\newtheorem{example}[theorem]{Example}%[section]
\newtheorem{remark}[theorem]{Remark}%[section]
\newtheorem{definition}[theorem]{Definition}%[section]
\newtheorem{question}[theorem]{Question}
\begin{document}

\title[Topological semigroups of matrix units and countably
compact Brandt $\lambda^0$-extensions~]{Topological semigroups of
matrix units and countably compact Brandt $\lambda^0$-extensions
of topological semigroups}

\author{Oleg~Gutik}
\address{Department of Mechanics and Mathematics, Ivan Franko Lviv National
University, Universytetska 1, Lviv, 79000, Ukraine}
\email{o\underline{\hskip5pt}\,gutik@franko.lviv.ua,
ovgutik@yahoo.com}

\author{Kateryna~Pavlyk}
\address{Pidstryhach Institute for Applied
Problems of Mechanics and Mathematics of National Academy of
Sciences, Naukova 3b, Lviv, 79060, Ukraine \, and \, Department of
Mechanics and Mathematics, Ivan Franko Lviv National University,
Universytetska 1, Lviv, 79000, Ukraine}
\email{kpavlyk@yahoo.co.uk}

\author{Andriy~Reiter}
\address{Department of Mechanics and Mathematics, Ivan Franko Lviv National
University, Universytetska 1, Lviv, 79000, Ukraine}
\email{reiter\underline{\hskip5pt}\,andriy@yahoo.com, reiter@i.ua}

\keywords{Topological semigroup, topological inverse semigroup,
semigroup of matrix units, semigroup of finite partial bijections,
Brandt $\lambda^0$-extension, topological Brandt
$\lambda^0$-extension, $H$-closed topological semigroup,
absolutely $H$-closed topological semigroup, topological
semilattice, compact space, countably compact space, pseudocompact
space}

\subjclass[2000]{Primary 20M20, 20M50, 22A15, 54H15. Secondary
06B30, 06F30, 22A26, 54G12, 54H10, 54H12}

\begin{abstract}
We show that a topological semigroup of finite partial bijections
$\mathscr{I}_\lambda^n$ of an infinite set with a compact
subsemigroup of idempotents is absolutely $H$-closed and any
countably compact topological semigroup does not contain
$\mathscr{I}_\lambda^n$ as a subsemigroup. We give sufficient
conditions onto a topological semigroup $\mathscr{I}_\lambda^1$ to
be non-$H$-closed. Also we describe the structure of countably
compact Brandt $\lambda^0$-extensions of topological monoids and
study the category of countably compact Brandt
$\lambda^0$-extensions of topological monoids with zero.
\end{abstract}

\maketitle

%\tableofcontents

\section{Introduction and preliminaries}

In this paper all spaces are Hausdorff. Further we follow the
terminology of \cite{CHK, CP, Engelking1989}. By $\omega$ we
denote the first infinite cardinal. If $Y$ is a subspace of a
topological space $X$ and $A\subseteq Y$, then by
$\operatorname{cl}_Y(A)$ we denote the topological closure of $A$
in $Y$.

An algebraic semigroup $S$ is called {\it inverse} if for any
element $x$ in $S$ there exists the unique $x^{-1}\in S$ such that
$xx^{-1}x=x$ and $x^{-1}xx^{-1}=x^{-1}$. The element $x^{-1}$ is
called {\it inverse to} $x\in S$. If $S$ is an inverse semigroup,
then the function $\operatorname{inv}\colon S\to S$ which assigns
to every element $x$ of $S$ an inverse element $x^{-1}$ is called
{\it inversion}.

If $S$ is a semigroup, then by $E(S)$ we denote the band (the
subset of idempotents) of $S$, and by $S^1$ [$S^0$] we denote the
semigroup $S$ with the adjoined unit [zero] (see \cite{CP}). Also
if a semigroup $S$ has zero $0_S$, then for any $A\subseteq S$ we
denote $A^*=A\setminus\{ 0_S\}$. For an inverse semigroup $S$ we
define the maps $\varphi\colon S\rightarrow E(S)$ and $\psi\colon
S\rightarrow E(S)$ by the formulae $\varphi(x)=x\cdot x^{-1}$ and
$\psi(x)=x^{-1}\cdot x$.

If $E$ is a semilattice, then the semilattice operation on $E$
determines the partial order $\leqslant$ on $E$: $$e\leqslant
f\quad\text{if and only if}\quad ef=fe=e.$$ This order is called
{\em natural}. An element $e$ of a partially ordered set $X$ is
called {\em minimal} if $f\leqslant e$  implies $f=e$ for $f\in
X$. An idempotent $e$ of a semigroup $S$ without zero (with zero)
is called \emph{primitive} if $e$ is a minimal element in $E(S)$
(in $(E(S))^*$).

A {\it topological} ({\it inverse}) {\it semigroup} is a
topological space together with a continuous multiplication (and
an inversion, respectively).

A topological space $X$ is called \emph{countably compact} if any
countable open cover of $X$ contains a finite
subcover~\cite{Engelking1989}. A topological space $X$ is called
\emph{pseudocompact} (\emph{discretely pseudocompact}) if every
locally finite (discrete) family of non-open subsets of $X$ is
finite~\cite{Engelking1989}. A Tychonoff topological space $X$ is
\emph{pseudocompact} if and only if each continuous real-valued
function on $X$ is bounded~(see
\cite[Theorem~3.10.22]{Engelking1989}). Obviously that every
countably compact space is pseudocompact and every pseudocompact
space is discretely pseudocompact. Also we observe that every
quasi-regular discretely pseudocompact space is pseudocompact. We
recall that the Stone-\v{C}ech compactification of a Tychonoff
space $X$ is a compact Hausdorff space $\beta{X}$ containing $X$
as a dense subspace so that each continuous map $f\colon
X\rightarrow Y$ to a compact Hausdorff space $Y$ extends to a
continuous map $\overline{f}\colon \beta{X}\rightarrow
Y$~\cite{Engelking1989}.

Let $S$ be a semigroup with zero and $\lambda$ be cardinal
$\geqslant 1$. On the set $B_{\lambda}(S)=\lambda\times S\times
\lambda\cup\{ 0\}$ we define the semigroup operation as follows
\begin{equation*}
 (\alpha,a,\beta)\cdot(\gamma, b, \delta)=
  \begin{cases}
    (\alpha, ab, \delta), & \text{ if } \beta=\gamma; \\
    0, & \text{ if } \beta\ne \gamma,
  \end{cases}
\end{equation*}
and $(\alpha, a, \beta)\cdot 0=0\cdot(\alpha, a, \beta)=0\cdot
0=0,$ for all $\alpha, \beta, \gamma, \delta\in\lambda$ and $a,
b\in S$. If $S=S^1$ is a semigroup with unit then the semigroup
$B_\lambda(S)$ is called the {\it Brandt $\lambda$-extension of
the monoid} $S$~\cite{GutikPavlyk2001}. Obviously, ${\mathcal
J}=\{ 0\}\cup\{(\alpha, {\mathscr O}, \beta)\mid {\mathscr O}$ is
the zero of $S\}$ is an ideal of $B_\lambda(S)$. We put
$B^0_\lambda(S)=B_\lambda(S)/{\mathcal J}$ and we shall call
$B^0_\lambda(S)$ the {\it Brandt $\lambda^0$-extension of the
monoid $S$ with zero}~\cite{GutikPavlyk2006}. Further, if
$A\subseteq S$ then we shall denote $A_{\alpha,\beta}=\{(\alpha,
s, \beta)\mid s\in A \}$ if $A$ does not contain zero, and
$A_{\alpha,\beta}=\{(\alpha, s, \beta)\mid s\in A\setminus\{ 0\}
\}\cup \{ 0\}$ if $0\in A$, for $\alpha, \beta\in\lambda$. If
$\mathcal{I}$ is a trivial semigroup (i.e. $\mathcal{I}$ contains
only one element), then by ${\mathcal{I}}^0$ we denote the
semigroup $\mathcal{I}$ with the adjoined zero. Obviously, for any
$\lambda\geqslant 2$ the Brandt $\lambda^0$-extension of the
semigroup ${\mathcal{I}}^0$ is isomorphic to the semigroup of
$\lambda\times\lambda$-matrix units and any Brandt
$\lambda^0$-extension of a monoid with zero contains the semigroup
of $\lambda\times\lambda$-matrix units. Further by $B_\lambda$ we
shall denote the semigroup of $\lambda\times\lambda$-matrix units
and by $B^0_\lambda(1)$ the subsemigroup of
$\lambda\times\lambda$-matrix units of the Brandt
$\lambda^0$-extension of a monoid $S$ with zero.

Let $\mathscr{I}_\lambda$ denote the set of all partial one-to-one
transformations of a set $X$ of cardinality $\lambda$ together
with the following semigroup operation:
\begin{equation*}
    x(\alpha\beta)=(x\alpha)\beta \quad \mbox{if} \quad
    x\in\operatorname{dom}(\alpha\beta)=\{
    y\in\operatorname{dom}\alpha\mid
    y\alpha\in\operatorname{dom}\beta\}, \qquad \mbox{for} \quad
    \alpha,\beta\in\mathscr{I}_\lambda.
\end{equation*}
The semigroup $\mathscr{I}_\lambda$ is called the \emph{symmetric
inverse semigroup} over the set $X$~(see \cite{CP}). The symmetric
inverse semigroup was introduced by Wagner~\cite{Wagner1952} and
it plays a major role in the theory of semigroups.

We denote
 $%\begin{equation*}
 \mathscr{I}_\lambda^n=\{
\alpha\in\mathscr{I}_\lambda\mid
\operatorname{rank}\alpha\leqslant n\},
 $%\end{equation*}
{} for $n=1,2,3,\ldots$. Obviously, $\mathscr{I}_\lambda^n$
($n=1,2,3,\ldots$) is an inverse semigroup,
$\mathscr{I}_\lambda^n$ is an ideal of $\mathscr{I}_\lambda$ for
each $n=1,2,3,\ldots$. Further, we shall call the semigroup
$\mathscr{I}_\lambda^n$ the \emph{symmetric inverse semigroup of
finite transformations of the rank $n$}. The elements of the
semigroup $\mathscr{I}_\lambda^n$ are called \emph{finite
one-to-one transformations} (\emph{partial bijections}) of the set
$X$. By
\begin{equation*}
\left(%
\begin{array}{cccc}
  x_1 & x_2 & \cdots & x_n \\
  y_1 & y_2 & \cdots & y_n \\
\end{array}%
\right)
\end{equation*}
we denote a partial one-to-one transformation which maps $x_1$
onto $y_1$, $x_2$ onto $y_2$, $\ldots$, and $x_n$ onto $y_n$, and
by $0$ the empty transformation. Obviously, in such case we have
$x_i\neq x_j$ and $y_i\neq y_j$ for $i\neq j$
($i,j=1,2,3,\ldots,n$). We observe that the the symmetric inverse
semigroup $\mathscr{I}_\lambda^1$ of finite transformations of the
rank $1$ is isomorphic to the semigroup of matrix units
$B_\lambda$.

A~semigroup $S$ is called {\it congruence-free} if it has only two
congruences: identical and universal~\cite{Schein1966}. Obviously,
a semigroup $S$ is congruence-free if and only if every
homomorphism $h$ of $S$ into an arbitrary semigroup $T$ is an
isomorphism "into" or is an annihilating homomorphism (i.~e. there
exists $c\in T$ such that $h(a)=c$ for all $a\in S$).

Let ${\mathscr S}$ be a class of topological semigroups.

\begin{definition}[\cite{GutikPavlyk2001, Stepp1969}]\label{def1}
A~semigroup $S\in{\mathscr S}$ is called {\it $H$-closed in
${\mathscr S}$}, if $S$ is a closed subsemigroup of any
topological semigroup $T\in{\mathscr S}$ which contains $S$ as a
subsemigroup. If ${\mathscr S}$ coincides with the class of all
topological semigroups, then the semigroup $S$ is called {\it
$H$-closed}.
\end{definition}

\begin{definition}[\cite{GutikPavlyk2003, Stepp1975}]\label{def3}
A~topological semigroup $S\in{\mathscr S}$ is called {\it
absolutely $H$-closed in the class ${\mathscr S}$} if any
continuous homomorphic image of $S$ into $T\in{\mathscr S}$ is
$H$-closed in ${\mathscr S}$. If ${\mathscr S}$ coincides with the
class of all topological semigroups, then the semigroup $S$ is
called {\it absolutely $H$-closed}.
\end{definition}

A~semigroup $S$ is called {\it algebraically closed in ${\mathscr
S}$} if $S$ with any semigroup topology $\tau$ such that $(S,
\tau)\in{\mathscr S}$ is $H$-closed in ${\mathscr
S}$~\cite{GutikPavlyk2001}. If ${\mathscr S}$ coincides with the
class of all topological semigroups, then the semigroup $S$ is
called {\it algebraically closed}. A~semigroup $S$ is called {\it
algebraically $h$-closed in ${\mathscr S}$} if $S$ with discrete
topology $\mathfrak{d}$ is absolutely $H$-closed in ${\mathscr S}$
and $(S, \mathfrak{d})\in{\mathscr S}$. If ${\mathscr S}$
coincides with the class of all topological semigroups, then the
semigroup $S$ is called {\it algebraically $h$-closed}.

Absolutely $H$-closed semigroups and algebraically $h$-closed
semigroups were introduced by Stepp in~\cite{Stepp1975}. There
they were called {\it absolutely maximal} and {\it algebraic
maximal}, respectively.

Many topologists have studied topological properties of
topological spaces of partial continuous maps $\mathscr{PC}(X,Y)$
from a topological space $X$ into a topological space $Y$ with
various topologies such as the Vietoris topology, generalized
compact-open topology, graph topology, $\tau$-topology, and others
(see \cite{Abd-AllahBrown1980, BoothBrown1978,
DiConcilioNaimpally1998, Filippov1996, Hola1998, Hola1999,
KumziShapiro1997, Kuratowski1955}). Since the set of all partial
continuous self-transformations $\mathscr{PC\!T}(X)$ of the space
$X$ with the operation composition is a semigroup,  many semigroup
theorists have considered the semigroup of continuous
transformations (see surveys \cite{Magill1975} and
\cite{GluskinScheinSnepermanYaroker1977}), or the semigroup of
partial homeomorphisms of an arbitrary topological space (see
\cite{Baird1977, Baird1977a, Baird1977b, Baird1979, Gluskin1977,
Mendes-GoncalvesSullivan2006, Sneperman1962, Yaroker1972}).
Be\u{\i}da~\cite{Beida1980}, Orlov~\cite{Orlov1974, Orlov1974a},
and Subbiah~\cite{Subbiah1987} have considered  semigroup and
inverse semigroup  topologies of semigroups of partial
homeomorphisms of some classes of topological spaces.  In this
context the results of our paper yield some notable results about
the topological behavior of the finite rank symmetric inverse
semigroups setting inside larger function space semigroups, or
larger semigroups in general.  For example, under reasonably
general conditions, the inverse semigroup of partial finite
bijections $\mathscr{I}_\lambda^n$ of rank $\leqslant n$ is a
closed subsemigroup of a topological semigroup which contains
$\mathscr{I}_\lambda^n$ as a subsemigroup.

Gutik and Pavlyk in \cite{GutikPavlyk2005} consider the partial
case of the semigroup $\mathscr{I}_\lambda^n$: an infinite
topological semigroup of $\lambda\times\lambda$-matrix units
$B_\lambda$. There they showed that an infinite topological
semigroup of $\lambda\times\lambda$-matrix units $B_\lambda$ does
not embed into a compact topological semigroup, $B_\lambda$ is
algebraically $h$-closed in the class of topological inverse
semigroups, described the Bohr compactification of $B_\lambda$ and
minimal semigroup and minimal semigroup inverse topologies on
$B_\lambda$.

Gutik, Lawson and Repov\v{s} in \cite{GutikLawsonRepovs2009}
introduced the notion of a semigroup with a tight ideal series and
investigated their closures in semitopological semigroups,
particularly inverse semigroups with continuous inversion. As a
corollary they show that the symmetric inverse semigroup of finite
transformations $\mathscr{I}_\lambda^n$ of infinite cardinal
$\lambda$ is algebraically closed in the class of
(semi)topological inverse semigroups with continuous inversion.
They also derive related results about the nonexistence of
(partial) compactifications of classes of considered semigroups.

In \cite{GutikReiter2009} Gutik and Reiter show that the
topological inverse semigroup $\mathscr{I}_\lambda^n$ is
algebraically $h$-closed in the class of topological inverse
semigroups. Also they prove that a topological semigroup $S$ with
countably compact square $S\times S$ does not contain the
semigroup $\mathscr{I}_\lambda^n$ for infinite cardinal $\lambda$
and show that the Bohr compactification of an infinite topological
semigroup $\mathscr{I}_\lambda^n$ is the trivial semigroup.

Gutik and Repov\v{s} in \cite{GutikRepovs} study algebraic
properties of Brandt $\lambda^0$-extensions of monoids with zero
and non-trivial homomorphisms between Brandt
$\lambda^0$-extensions of monoids with zero. Also they describe a
category whose objects are ingredients of the construction of
Brandt $\lambda^0$-extensions of monoids with zeros. There they
introduce finite, compact topological Brandt
$\lambda^0$-extensions of topological semigroups and countably
compact topological Brandt $\lambda^0$-extensions of topological
inverse semigroups in the class of topological inverse semigroups
and establish the structure of such extensions and non-trivial
continuous homomorphisms between such topological Brandt
$\lambda^0$-extensions of topological  monoids with zero. They
also describe a category whose objects are ingredients in the
constructions of finite (compact, countably compact) topological
Brandt $\lambda^0$-extensions of topological  monoids with zeros.

In this paper we show that a topological semigroup of finite
partial bijections $\mathscr{I}_\lambda^n$ of an infinite set with
a compact subsemigroup of idempotents is absolutely $H$-closed. We
prove that any countably compact topological semigroup and any
Tychonoff  topological semigroup with pseudocompact square do not
contain $\mathscr{I}_\lambda^n$ as a subsemigroup. Moreover every
continuous homomorphism from topological semigroup
$\mathscr{I}_\lambda^n$ into a countably compact topological
semigroup or Tychonoff topological semigroup with pseudocompact
square is annihilating. We give sufficient conditions onto a
topological semigroup $\mathscr{I}_\lambda^1$ to be non-$H$-closed
and show that the topological inverse semigroup
$\mathscr{I}_\lambda^1$ is absolutely $H$-closed if and only if
the band $E(\mathscr{I}_\lambda^1)$ is compact. Also we describe
the structure of countably compact Brandt $\lambda^0$-extensions
of topological monoids and establish the category of countably
compact Brandt $\lambda^0$-extensions of topological monoids with
zero.

%%%%%%%%%%%%%%%%%%%%%%%%%%%%%%%%%%%%%%%%%%%%%%%%%%%%%%%%%%%%%%%%%%

\section{On the closure and embedding of the semigroup of matrix units}

\begin{lemma}\label{lemma1.1}
Let $E$ be a topological semilattice with zero $0$ such that every
non-zero idempotent of $E$ is primitive. Then every non-zero
element of $E$ is an isolated point in $E$. Moreover for the
infinite topological semilattice $E$ the following conditions are
equivalent:
\begin{enumerate}
    \item[$(i)$] $E$ is compact;
    \item[$(ii)$] $E$ is countably compact;
    \item[$(iii)$] $E$ is pseudocompact;
    \item[$(iv)$] $E$ is discretely pseudocompact;
    \item[$(v)$] $E$ is homeomorphic to the one-point Alexandroff
    compactification of the discrete space $X$ of cardinality
    $|E|$ with zero $0$ as the remainder.
\end{enumerate}
\end{lemma}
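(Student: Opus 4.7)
The plan is first to establish that every non-zero element of $E$ is isolated by combining primitivity with the continuity of multiplication, and then to reduce the chain of equivalences to a single nontrivial step, namely $(iv)\Rightarrow(v)$.

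The opening observation is purely algebraic: for any two distinct non-zero elements $e,f\in E$, the product $ef$ satisfies $ef\leqslant e$ and $ef\leqslant f$, and primitivity of $e$ and of $f$ then forces $ef=0$ (otherwise $ef=e$ and $ef=f$ would give $e=f$). Isolation of a non-zero $e$ follows from this: using the Hausdorff property of $E$, pick an open neighborhood $U\ni e$ with $0\notin U$; continuity of multiplication at $(e,e)$ provides an open $V\ni e$ with $V\cdot V\subseteq U$. Every $x\in V$ then satisfies $xe\in U$, so $xe\neq 0$, and the algebraic observation (applied to the idempotents $x$ and $e$ of the semilattice $E$) forces $x=e$. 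Hence $V=\{e\}$ and $e$ is isolated.

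For the equivalences, $(v)\Rightarrow(i)\Rightarrow(ii)\Rightarrow(iii)\Rightarrow(iv)$ are all either standard or were recalled in the preliminaries of the paper. The substantive implication is $(iv)\Rightarrow(v)$. Since non-zero points are isolated, $E\setminus\{0\}$ is a discrete open subspace, so it remains only to show that $0$ admits a neighborhood base of cofinite sets. I would argue by contradiction: if some open neighborhood $U$ of $0$ has infinite complement $A=E\setminus U$, then $\{\{a\}:a\in A\}$ is an infinite family of non-empty open singletons, and it is discrete because each $a\in A$ has $\{a\}$ as a separating neighborhood, each non-zero point outside $A$ is isolated hence admits its own singleton as a separating neighborhood, and the point $0$ itself has $U$ as a neighborhood entirely disjoint from $A$. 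This infinite discrete family of non-empty open sets contradicts discrete pseudocompactness.

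The main obstacle is precisely the verification that the family exhibited in $(iv)\Rightarrow(v)$ is discrete at the point $0$; the point is that while no \emph{small} open neighborhood of $0$ need avoid $A$, the originally chosen $U$ itself works by construction, and this is exactly the place where the definition $A:=E\setminus U$ is exploited. Once this is checked, the decomposition $E=(E\setminus\{0\})\cup\{0\}$ into a discrete open part and a remainder with cofinite neighborhood base identifies $E$ with the one-point Alexandroff compactification of the discrete space of cardinality $|E|$, proving $(v)$.
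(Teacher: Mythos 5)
Your proof is correct and follows essentially the same route as the paper's: isolation of non-zero points via continuity of the operation at a primitive idempotent (using that distinct primitive idempotents multiply to $0$), and then an infinite discrete family of open singletons lying outside a fixed open neighbourhood of $0$ to contradict discrete pseudocompactness. The only cosmetic difference is that you close the cycle with $(iv)\Rightarrow(v)$ directly, whereas the paper proves $(iv)\Rightarrow(i)$ by a cover argument and treats $(v)\Leftrightarrow(i)$ separately.
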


\begin{proof}
Let $x\in E^*$. Since $E$ is a Hausdorff topological semilattice,
for any open neighbourhood $U(x)\not\ni 0$ of the point $x$ there
exists an open neighbourhood $V(x)$ of $x$ such that $V(x)\cdot
V(x)\subseteq U(x)$. If $x$ is not an isolated point of $E$ then
$V(x)\cdot V(x)\ni 0$ which contradicts to the choice of the
neighbourhood $U(x)$. This implies the first assertion of the
lemma.

We observe that the implications $(i)\Rightarrow(ii)$,
$(ii)\Rightarrow(iii)$ and $(iii)\Rightarrow(iv)$ are trivial.

To show the implication $(iv)\Rightarrow(i)$ suppose that the
semilattice $E$ is discretely pseudocompact and $E$ satisfies the
assertion of lemma. Suppose to the contrary that $E$ is not
compact. Let $\mathscr{C}=\{ U_s\mid s\in\mathscr{J}\}$ be any
open cover of $E$ such that $\mathscr{C}$ does not contain a
finite subcover. Let $U_{s_0}\in\mathscr{C}$ such that $0\in
U_{s_0}$. We denote $A=E\setminus U_{s_0}$. Since the topological
semilattice $E$ is non-compact, the set $A$ is infinite. Put
$\mathscr{W}=\{A\}\bigcup\{\{x\}\mid x\in A\}$. Then $\mathscr{W}$
is an infinite discrete family of open non-empty subsets of $E$.
This contradicts to discrete compactness of $A$. The obtained
contradiction implies that $E$ is a compact semilattice.

Simple verifications show that if the semilattice $E$ is
homeomorphic to the one-point Alexandroff compactification of the
discrete space $X$ of cardinality $|E|$ with zero $0$ as the
remainder the semilattice operation is continuous. This implies
the implications $(v)\Rightarrow(i)$. Also the first assertion of
the lemma implies the implications $(i)\Rightarrow(v)$.
\end{proof}

\begin{lemma}\label{lemma1.2}
Let $T$ be a topological semigroup which contains the infinite
semigroup of matrix units $B_\lambda$ as a dense subsemigroup.
Then the following conditions hold:
\begin{itemize}
    \item[$(i)$] the zero $0$ of $B_\lambda$ is thw zero of $T$;
    \item[$(ii)$] if $T\setminus B_\lambda\neq\varnothing$ then
         $x^2=0$ for all $x\in T\setminus B_\lambda$; and
    \item[$(iii)$] $E(T)=E(B_\lambda)$.
\end{itemize}
\end{lemma}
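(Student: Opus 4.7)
Part (i) follows by a standard density argument: the continuous maps $L_0, R_0 \colon T \to T$ defined by $L_0(y) = 0 \cdot y$ and $R_0(y) = y \cdot 0$ coincide with the constant map $c_0 \equiv 0$ on the dense subsemigroup $B_\lambda$, so by the Hausdorff property of $T$, $L_0 = R_0 \equiv 0$ on all of $T$.

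The crucial tool for (ii) and (iii) is the following isolation claim: every non-zero idempotent $e = (i,i) \in E(B_\lambda)$ is an isolated point of $T$. Indeed, a direct computation yields $e B_\lambda e = \{e, 0\}$, so the continuous map $f_e \colon T \to T$ given by $y \mapsto e y e$ carries $B_\lambda$ into the closed two-point set $\{e, 0\}$. Continuity plus density then forces $f_e(T) \subseteq \overline{\{e,0\}} = \{e, 0\}$, and hence both $f_e^{-1}(e)$ and $f_e^{-1}(0)$ are clopen. Since $f_e^{-1}(e) \cap B_\lambda = \{e\}$ and $B_\lambda$ is dense in $T$, the open set $f_e^{-1}(e)$ is contained in $\overline{\{e\}} = \{e\}$, so $\{e\}$ is open in $T$.

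I would establish (iii) next. The inclusions $E(B_\lambda) \subseteq E(T)$ and $E(T) \cap B_\lambda \subseteq E(B_\lambda)$ are immediate, so it suffices to rule out the existence of $x \in E(T) \setminus B_\lambda$; such $x$ satisfies $x \neq 0$ and $x^2 = x$. The containment $eTe \subseteq \{e, 0\}$ from the second paragraph gives $exe \in \{e, 0\}$ for every non-zero $e \in E(B_\lambda)$; the value $exe = e$ is excluded by taking any net $x_\alpha \to x$ in $B_\lambda$ and noting that isolation of $\{e\}$ would force $ex_\alpha e = e$ eventually, hence $x_\alpha = e$ eventually, hence $x = e \in B_\lambda$, a contradiction. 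Thus $exe = 0$ for every non-zero $e \in E(B_\lambda)$. Now, passing to a subnet of $\{x_\alpha\}$, one of two cases holds. If $x_\alpha^2 = 0$ for all $\alpha$, then $x = x^2 = \lim x_\alpha^2 = 0$, contradicting $x \neq 0$. If $x_\alpha^2 \neq 0$ for all $\alpha$, then each $x_\alpha$ is itself a non-zero idempotent $(i_\alpha,i_\alpha)$ of $B_\lambda$; joint continuity of multiplication then yields $x_\alpha \cdot x \cdot x_\alpha \to x \cdot x \cdot x = x^3 = x$, whereas by the previous step $x_\alpha \, x \, x_\alpha = 0$ for every $\alpha$, forcing $x = 0$, again a contradiction.

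For (ii), let $x \in T \setminus B_\lambda$ and take a net $x_\alpha \to x$ in $B_\lambda$. The identity $(x_\alpha^2)^2 = x_\alpha^2$, which holds in $B_\lambda$, passes to the limit to yield $(x^2)^2 = x^2$, so $x^2 \in E(T) = E(B_\lambda)$ by (iii). If $x^2 = e$ were a non-zero idempotent, isolation of $\{e\}$ would force $x_\alpha^2 = e$ eventually, hence $x_\alpha = e$ eventually, hence $x = e \in B_\lambda$, a contradiction. Therefore $x^2 = 0$. The main obstacle is the intermediate isolation claim of the second paragraph: it does not follow from Lemma \ref{lemma1.1} applied to the semilattice $E(B_\lambda)$, which only gives isolation within $E(B_\lambda)$ and not as points of the ambient $T$. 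This is why the algebraic identity $eB_\lambda e = \{e,0\}$ must be combined with density of $B_\lambda$ in $T$; from it both (ii) and (iii) drop out, with the added bookkeeping that (ii) is naturally derived after (iii) rather than in the stated order.
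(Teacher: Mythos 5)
Your argument is correct, but it follows a genuinely different route from the paper's. For part $(i)$ the two proofs coincide (a closed set containing a dense subsemigroup). For the rest, the paper proves $(ii)$ directly: assuming $x^2=y\neq 0$ for some $x\in T\setminus B_\lambda$, it separates $y$ from $0$, invokes the external fact that the band $E(B_\lambda)$ is a \emph{closed} subsemigroup of $T$ (via the closure-of-a-semilattice result and Stepp's theorem) to choose a neighbourhood $V(x)$ disjoint from $E(B_\lambda)$, and then observes that $V(x)$ must contain infinitely many non-idempotents of $B_\lambda$, two of which multiply to $0$, forcing $0\in V(x)\cdot V(x)$ --- a contradiction; part $(iii)$ is then an immediate corollary of $(ii)$. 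You instead first establish the stronger intermediate fact that every non-zero idempotent $e$ of $B_\lambda$ is isolated \emph{in} $T$, extracted purely from the algebraic identity $eB_\lambda e=\{e,0\}$ together with density (your clopen-preimage argument for $f_e(y)=eye$ is sound, as is the step $U\subseteq\operatorname{cl}_T(U\cap B_\lambda)$ for open $U$), then prove $(iii)$ via $exe=0$ and a net/subnet case analysis, and finally deduce $(ii)$ from $(iii)$ using that $x^2$ is an idempotent of $T$. Both arguments are valid and the reordering of $(ii)$ and $(iii)$ is harmless since no circularity is introduced. What your version buys is self-containment --- it needs no citation of Stepp's theorem or of the closedness of $E(B_\lambda)$ in $T$ --- and the isolation of the non-zero idempotents of $B_\lambda$ in the ambient semigroup $T$ is a reusable fact that is strictly stronger than what Lemma~\ref{lemma1.1} gives (isolation only within the semilattice), a distinction you correctly flag. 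What the paper's version buys is brevity: one short neighbourhood computation settles $(ii)$, and $(iii)$ is a one-line consequence.
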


\begin{proof}
$(i)$ The set $\{x\in T\mid x\cdot 0=0\cdot x=0\}$ is closed and
contains the dense subset $B_\lambda$, so coincides with $T$.

$(ii)$ Let $T\setminus B_\lambda\neq\varnothing$. Suppose to the
contrary that there exists $x\in T\setminus B_\lambda$ such that
$x^2=y\neq 0$. Then for any open neighbourhoods $U(y)$ and $U(0)$
of $y$ and $0$ such that $U(y)\cap U(0)=\varnothing$ there exists
an open neighbourhood $V(x)$ such that $V(x)\cdot V(x)\subseteq
U(y)$ and $V(x)\cap U(0)=\varnothing$. Since  the closure of
semilattice in a topological semigroup is subsemilattice (see
\cite[Corollary~19]{GutikPavlyk2003}) Theorem~9 of
\cite{Stepp1975} implies that the band $E(B_\lambda)$ is a closed
subsemigroup of $T$. Hence without loss of generality we can
assume that $V(x)\subseteq T\setminus E(B_\lambda)$. Since the
neighbourhood $V(x)$ of the point $x$ contains infinitely many
point from $B_\lambda\setminus E(B_\lambda)$, we have that $0\in
V(x)\cdot V(x)$. This contradicts to the assumption that $U(y)\cap
U(0)=\varnothing$. Therefore $x^2=0$.

$(iii)$ The statement follows from statement $(ii)$.
\end{proof}

\begin{theorem}\label{theorem1.3}
A topological semigroup of matrix units $B_\lambda$ with a compact
band $E(B_\lambda)$ is an $H$-closed topological semigroup.
\end{theorem}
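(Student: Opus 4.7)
The plan is to show that whenever $B_\lambda$ embeds as a subsemigroup in a topological semigroup $T$, the closure $S=\operatorname{cl}_T B_\lambda$ coincides with $B_\lambda$. Replacing $T$ by $S$, I may assume $B_\lambda$ is dense in $T$; then Lemma~\ref{lemma1.2}(i) guarantees that the zero $0$ of $B_\lambda$ is the two-sided zero of $T$. Suppose for contradiction that there is some $x\in T\setminus B_\lambda$, so in particular $x\neq 0$.

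Two ingredients drive the argument. First, for each pair $\alpha,\beta\in\lambda$ the continuous map $\phi_{\beta,\alpha}\colon T\to T$, $s\mapsto e_{\beta,\beta}\cdot s\cdot e_{\alpha,\alpha}$, sends each matrix unit $e_{\gamma,\delta}$ to $e_{\beta,\alpha}$ (if $\gamma=\beta$ and $\delta=\alpha$) or to $0$ (otherwise), and fixes $0$. Hence $\phi_{\beta,\alpha}(B_\lambda)\subseteq\{0,e_{\beta,\alpha}\}$, and since the latter is closed in the Hausdorff space $T$, the inclusion extends by density to $\phi_{\beta,\alpha}(T)\subseteq\{0,e_{\beta,\alpha}\}$ for all $\alpha,\beta$. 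Second---and this is where the compactness of $E(B_\lambda)$ enters---consider
\[
F=\{(s,e)\in T\times E(B_\lambda):se=s\},
\]
the equaliser of two continuous maps into the Hausdorff target $T$, hence closed in $T\times E(B_\lambda)$. Because $E(B_\lambda)$ is compact, the projection $\pi_1\colon T\times E(B_\lambda)\to T$ is a closed map, so
\[
\pi_1(F)=\{s\in T:se=s\text{ for some }e\in E(B_\lambda)\}
\]
is closed in $T$. Since every $e_{\gamma,\delta}\in B_\lambda^{*}$ is right-fixed by $e_{\delta,\delta}$, and $0$ is absorbed by every idempotent, one has $B_\lambda\subseteq\pi_1(F)$; density of $B_\lambda$ then forces $\pi_1(F)=T$.

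Combining the ingredients finishes the proof. Applying the second to $x$ gives some $e\in E(B_\lambda)$ with $xe=x$; since $x\neq 0$, the idempotent must be $e_{\alpha,\alpha}$ for some $\alpha$. The symmetric argument (with left multiplication) produces some $\beta$ with $e_{\beta,\beta}\,x=x$, and then
\[
x=e_{\beta,\beta}\,x\,e_{\alpha,\alpha}=\phi_{\beta,\alpha}(x)\in\{0,e_{\beta,\alpha}\}\subseteq B_\lambda,
\]
contradicting $x\in T\setminus B_\lambda$. The main obstacle will be the closedness step: one has to notice that the compactness hypothesis on $E(B_\lambda)$ is best used via the tube lemma applied to the projection with compact fibre $E(B_\lambda)$, turning the pointwise existence statement ``$se=s$ for some idempotent $e$'' into a closed subset of $T$ which then inherits $T$-density from $B_\lambda$.
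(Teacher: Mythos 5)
Your proof is correct, but it takes a genuinely different route from the paper's. The paper argues by direct neighbourhood manipulation: assuming $B_\lambda$ dense in $T$ with $x\in T\setminus B_\lambda$, it separates $x$ from $0$, uses Lemma~\ref{lemma1.1} together with compactness of $E(B_\lambda)$ to make $E(B_\lambda)\setminus V(0)$ finite, notes that $V(x)$ must contain infinitely many matrix units $(\alpha,\beta)$, and finds one whose domain or range idempotent lies in $V(0)$, so that $(\alpha,\beta)=(\alpha,\beta)\cdot(\beta,\beta)\in V(x)\cdot V(0)\subseteq U(0)$ also lies in $U(x)$ --- contradicting $U(x)\cap U(0)=\varnothing$. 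You instead make two structural observations: (a) by density each sandwich map $s\mapsto e_{\beta,\beta}\,s\,e_{\alpha,\alpha}$ carries all of $T$ into the closed set $\{0,e_{\beta,\alpha}\}$; (b) the relation $se=s$ cuts out a closed subset of $T\times E(B_\lambda)$, and since projection along a compact factor is a closed map (Kuratowski's theorem), every element of $T$ is fixed on the right (and, symmetrically, on the left) by some idempotent of $E(B_\lambda)$. Combining (a) and (b) pins every nonzero $x\in T$ into $\{0,e_{\beta,\alpha}\}\subseteq B_\lambda$, so $T=B_\lambda$. Your version localizes the use of compactness to a single clean closed-projection step and avoids both the counting of points of $B_\lambda$ in $V(x)$ and the explicit choice of separating neighbourhoods; the paper's version is more elementary in that it needs nothing beyond continuity of multiplication and Lemma~\ref{lemma1.1}. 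Both arguments lean on Lemma~\ref{lemma1.2}$(i)$ that $0$ is the zero of $T$ (you need it to rule out $e=0$ when concluding $e=e_{\alpha,\alpha}$), and both correctly reduce to the case where $B_\lambda$ is dense in $T$.
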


\begin{proof}
Since the statement theorem is trivial in case when the set
$E(B_\lambda)$ is finite, we consider the case when the band
$E(B_\lambda)$ is infinite.

Suppose to the contrary that there exists a topological semigroup
$T$ which contains $B_\lambda$ as a non-closed subsemigroup.
Without loss of generality we can assume that $B_\lambda$ is a
dense subsemigroup of $T$ and $T\setminus
B_\lambda\neq\varnothing$. Let $x\in T\setminus B_\lambda$. By
Lemma~\ref{lemma1.2} we have that zero $0$ of the semigroup
$B_\lambda$ is zero in the topological semigroup $T$ and $x^2=0$.

Since $0\cdot x=x\cdot 0=0$ for any open neighbourhoods $U(x)$ and
$U(0)$ in $T$ of $x$ and $0$, respectively, such that $U(x)\cap
U(0)=\varnothing$, there exist open neighbourhoods $V(x)$ and
$V(0)$ in $T$ of $x$ and $0$, respectively, such that
\begin{equation*}
    V(0)\cdot V(x)\subseteq U(0), \quad V(x)\cdot V(0)\subseteq
    U(0), \quad V(x)\subseteq U(x) \quad \mbox{and} \quad
    V(0)\subseteq U(0).
\end{equation*}
Since by Lemma~\ref{lemma1.1} any non-zero idempotent of
$B_\lambda$ is an isolated point in $E(B_\lambda)$, compactness of
$E(B_\lambda)$ implies that the set $E(B_\lambda)\setminus V(0)$
is finite and $V(x)\cap E(B_\lambda)=\varnothing$. Since the
neighbourhood $V(x)$ contains infinitely many element of the
semigroup $B_\lambda$ and the set $E(B_\lambda)\setminus V(0)$ is
finite, there exists $(\alpha,\beta)\in V(x)$ such that either
$(\alpha,\alpha)\in V(0)$ or $(\beta,\beta)\in V(0)$. Therefore,
we have that at least one of the following conditions holds:
\begin{equation*}
    (V(x)\cdot V(0))\cap V(x)\neq\varnothing \qquad \mbox{and}
    \qquad
    (V(0)\cdot V(x))\cap V(x)\neq\varnothing.
\end{equation*}
This contradicts to the assumption that $U(x)\cap
U(0)=\varnothing$. The obtained contradiction implies the
statement of the theorem.
\end{proof}

Lemma~\ref{lemma1.1} and Theorem~\ref{theorem1.3} imply the
following:

\begin{corollary}\label{corollary1.3a}
A topological semigroup of matrix units $B_\lambda$ with a
discretely pseudocompact (pseudocompact, countably compact) band
$E(B_\lambda)$ is an $H$-closed topological semigroup.
\end{corollary}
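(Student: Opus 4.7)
The plan is to derive the corollary as a direct chain: from the hypothesis on $E(B_\lambda)$, apply Lemma~\ref{lemma1.1} to upgrade ``discretely pseudocompact'' to ``compact,'' and then invoke Theorem~\ref{theorem1.3}.

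First I would verify that the band $E(B_\lambda)$ satisfies the hypothesis of Lemma~\ref{lemma1.1}, namely that it is a topological semilattice with zero in which every non-zero idempotent is primitive. The idempotents of the semigroup of matrix units are exactly the diagonal units $(\alpha,\alpha)$ together with $0$, and by the multiplication rule $(\alpha,\alpha)\cdot(\beta,\beta)=0$ whenever $\alpha\ne\beta$. Hence the natural partial order on $E(B_\lambda)^*$ is trivial: distinct non-zero idempotents are incomparable, so each of them is minimal in $(E(B_\lambda))^*$, i.e.\ primitive. Continuity of the restricted multiplication makes $E(B_\lambda)$ a topological semilattice with zero.

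Next, since $E(B_\lambda)$ fulfils the hypothesis of Lemma~\ref{lemma1.1}, the equivalences $(i)\Leftrightarrow(ii)\Leftrightarrow(iii)\Leftrightarrow(iv)$ established there give that discrete pseudocompactness (and a fortiori pseudocompactness or countable compactness) of $E(B_\lambda)$ is equivalent to its compactness. Therefore, under any of the three hypotheses of the corollary, the band $E(B_\lambda)$ is in fact compact.

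Finally, once $E(B_\lambda)$ is known to be compact, Theorem~\ref{theorem1.3} applies verbatim and yields that $B_\lambda$ is an $H$-closed topological semigroup, finishing the proof. There is no real obstacle here: the only point requiring even a brief check is the primitivity of non-zero idempotents of $B_\lambda$, and that is immediate from the orthogonality of distinct diagonal matrix units. The corollary is essentially a packaging of Lemma~\ref{lemma1.1} with Theorem~\ref{theorem1.3}.
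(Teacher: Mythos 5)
Your proposal is correct and matches the paper exactly: the authors derive this corollary precisely by combining the equivalences $(i)$--$(iv)$ of Lemma~\ref{lemma1.1} (whose hypotheses hold since distinct non-zero idempotents of $B_\lambda$ are orthogonal, hence primitive) with Theorem~\ref{theorem1.3}. The only detail you leave implicit is the trivial finite case, where $E(B_\lambda)$ is automatically compact.
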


By Theorem~1~\cite{Gluskin1955} the semigroup of matrix units
$B_\lambda$ is congruence free and hence any homomorphic image of
$B_\lambda$ is either the semigroup of matrix units or the trivial
semigroup. Since a continuous image of a compact space is a
compact space (see \cite[Thorem~3.1.10]{Engelking1989}),
Theorem~\ref{theorem1.3} implies the following:

\begin{theorem}\label{theorem1.4}
A topological semigroup of matrix units $B_\lambda$ with a compact
band $E(B_\lambda)$ is an absolutely $H$-closed topological
semigroup.
\end{theorem}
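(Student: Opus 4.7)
The plan is to unpack the definition of absolutely $H$-closed. Let $h\colon B_\lambda\to T$ be an arbitrary continuous homomorphism of $B_\lambda$ into a topological semigroup $T$; by Definition~\ref{def3}, I only need to show that $h(B_\lambda)$ (equipped with the subspace topology inherited from $T$) is $H$-closed.

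First, I would invoke the Gluskin congruence-freeness of $B_\lambda$ cited just before the statement: every homomorphism of $B_\lambda$ into a semigroup is either an annihilating homomorphism or an algebraic isomorphism into. If $h$ is annihilating then $h(B_\lambda)$ is a single point, which is trivially $H$-closed. So I may assume $h$ is an algebraic isomorphism onto its image, in which case $h(B_\lambda)$ is a topological semigroup algebraically isomorphic to $B_\lambda$.

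Next, I would verify the hypothesis of Theorem~\ref{theorem1.3} for $h(B_\lambda)$. Since $h$ is an algebraic isomorphism and maps idempotents to idempotents, it carries $E(B_\lambda)$ bijectively onto $E(h(B_\lambda))$; as the continuous image of the compact set $E(B_\lambda)$, the band $E(h(B_\lambda)) = h(E(B_\lambda))$ is compact. Theorem~\ref{theorem1.3} then yields that $h(B_\lambda)$ is $H$-closed, which completes the argument.

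The proof is essentially a bookkeeping combination of three already-available ingredients: the definition of absolutely $H$-closed, Gluskin's congruence-freeness of $B_\lambda$, and Theorem~\ref{theorem1.3}. There is no substantive obstacle, since all the hard topological work is inside Theorem~\ref{theorem1.3}; the only point that requires a moment of care is the transport of the compactness of the band along the algebraic isomorphism, which is immediate from continuity of $h$ and the fact that $h$ restricts to a bijection $E(B_\lambda)\to E(h(B_\lambda))$.
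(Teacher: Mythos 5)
Your proposal is correct and follows essentially the same route as the paper: the authors likewise combine Gluskin's congruence-freeness of $B_\lambda$ (so every homomorphic image is trivial or a copy of $B_\lambda$), the fact that a continuous image of the compact band is compact, and Theorem~\ref{theorem1.3} applied to the image. The paper presents this as a one-paragraph deduction preceding the statement rather than a displayed proof, but the content is identical to yours.
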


Lemma~\ref{lemma1.1} and Theorem~\ref{theorem1.4} imply the
following:

\begin{corollary}\label{corollary1.4a}
A topological semigroup of matrix units $B_\lambda$ with a
discretely pseudocompact (pseudocompact, countably compact) band
$E(B_\lambda)$ is an absolutely $H$-closed topological semigroup.
\end{corollary}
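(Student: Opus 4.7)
The plan is to reduce all three pseudocompactness hypotheses on $E(B_\lambda)$ to the compactness hypothesis of Theorem~\ref{theorem1.4}, and then simply invoke that theorem. The key observation is that the band of the semigroup of matrix units has a very special shape: its non-zero idempotents are exactly the diagonal matrix units $(\alpha,\alpha)$, and the product of any two distinct such idempotents is $0$. Hence $E(B_\lambda)$ is a topological semilattice with zero in which every non-zero idempotent is primitive, which is precisely the hypothesis of Lemma~\ref{lemma1.1}.

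First I would dispose of the trivial case: if $E(B_\lambda)$ is finite, then (being Hausdorff) it is automatically compact, so Theorem~\ref{theorem1.4} applies directly. Assuming now that $E(B_\lambda)$ is infinite, I would apply Lemma~\ref{lemma1.1} to $E=E(B_\lambda)$. The lemma then yields the chain of equivalences $(i)\Leftrightarrow(ii)\Leftrightarrow(iii)\Leftrightarrow(iv)$, so in particular discrete pseudocompactness of $E(B_\lambda)$ implies its compactness; a fortiori, pseudocompactness and countable compactness imply compactness as well. Once compactness of the band is in hand, Theorem~\ref{theorem1.4} delivers the conclusion that $B_\lambda$ is absolutely $H$-closed.

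There is essentially no obstacle here beyond checking that the structural hypothesis of Lemma~\ref{lemma1.1} is satisfied by $E(B_\lambda)$, and this is immediate from the definition of the multiplication in $B_\lambda$. The whole argument is therefore a two-line deduction: Lemma~\ref{lemma1.1} collapses the four pseudocompactness-type hypotheses to compactness, and Theorem~\ref{theorem1.4} then finishes the job.
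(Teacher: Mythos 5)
Your proposal is correct and follows exactly the route the paper intends: the corollary is stated in the paper as an immediate consequence of Lemma~\ref{lemma1.1} (which collapses discrete pseudocompactness, pseudocompactness, and countable compactness of the band to compactness, since every non-zero idempotent of $E(B_\lambda)$ is primitive) together with Theorem~\ref{theorem1.4}. Your extra care in checking the structural hypothesis of Lemma~\ref{lemma1.1} and in disposing of the finite case is exactly the implicit content of the paper's one-line derivation.
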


The following theorem shows that the converse statement to
Theorem~\ref{theorem1.3} is true when $B_\lambda$ is a topological
inverse semigroup.

\begin{theorem}\label{theorem1.5}
If $B_\lambda$ is an $H$-closed topological inverse semigroup,
then the band $E(B_\lambda)$ is compact.
\end{theorem}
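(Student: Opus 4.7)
My plan is to prove the contrapositive: assuming $E(B_\lambda)$ is not compact, I will construct a topological semigroup $T\supseteq B_\lambda$ in which $B_\lambda$ is not closed. By Lemma~\ref{lemma1.1}, non-compactness of $E(B_\lambda)$ yields an open neighbourhood $U_0$ of $0$ in $E(B_\lambda)$ whose complement in $E(B_\lambda)^*$ is infinite; I extract a countably infinite set of non-zero idempotents lying outside $U_0$ and partition it into two disjoint infinite families $\{(\alpha_n,\alpha_n):n\in\omega\}$ and $\{(\beta_n,\beta_n):n\in\omega\}$ with all indices $\alpha_n,\beta_n$ distinct. The off-diagonal elements $a_n=(\alpha_n,\beta_n)\in B_\lambda$ then satisfy $a_n\cdot a_m=0$ for all $n,m\in\omega$, since no $\alpha$ coincides with any $\beta$.

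I then set $T=B_\lambda\cup\{y\}$ for a new symbol $y$, extending the multiplication of $B_\lambda$ by $y\cdot y=0$ and $y\cdot a=a\cdot y=0$ for every $a\in B_\lambda$ (the choice forced in part by Lemma~\ref{lemma1.2}). The topology on $T$ keeps $B_\lambda$ as an open subspace with its given topology, and declares $W_N=\{y\}\cup\{a_n:n\geqslant N\}$, $N\in\omega$, to be a basis of neighbourhoods at $y$. Then $y$ lies in the closure of $B_\lambda$ but not in $B_\lambda$, so provided that $T$ is a Hausdorff topological semigroup, $B_\lambda$ fails to be closed in $T$, contradicting $H$-closedness. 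Separating $y$ from any element of $B_\lambda^*$ is immediate using a tail $W_N$; for separating $y$ from $0$ I use continuity of $\psi\colon B_\lambda\to E(B_\lambda)$, $\psi(x)=x^{-1}x$, noting that $\psi(a_n)=(\beta_n,\beta_n)\notin U_0$, so $\psi^{-1}(U_0)$ is a neighbourhood of $0$ in $B_\lambda$ disjoint from every $W_N$.

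Continuity of multiplication at $(y,y)$ and at $(y,a)$, $(a,y)$ for non-zero $a\in B_\lambda$ is straightforward: $a_n\cdot a_m\equiv 0$, and each fixed non-zero $a$ satisfies $a\cdot a_n=a_n\cdot a=0$ for cofinitely many $n$ because the $\alpha_n$ and $\beta_n$ are pairwise distinct. The main obstacle will be continuity at $(0,y)$ and $(y,0)$: given an arbitrary neighbourhood $V$ of $0$, I must produce a neighbourhood $U$ of $0$ in $B_\lambda$ with $U\cdot W_0\subseteq V$ and $W_0\cdot U\subseteq V$. The crucial observation is that $\{(\alpha_n,\alpha_n):n\in\omega\}$ is closed in $E(B_\lambda)$, since its non-zero members are isolated by Lemma~\ref{lemma1.1} and $U_0$ is a neighbourhood of $0$ missing all of them; by continuity of $\psi$ the preimage $\psi^{-1}(\{(\alpha_n,\alpha_n):n\in\omega\})$ is therefore closed in $B_\lambda$, and its complement is an open neighbourhood $U$ of $0$ in which every non-zero element $(\gamma,\delta)$ has $\delta\neq\alpha_n$ for all $n$. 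Such a $(\gamma,\delta)$ multiplies every $a_n$ to $0$, giving $U\cdot W_0=\{0\}\subseteq V$. The case $W_0\cdot U\subseteq V$ is symmetric, using $\varphi(x)=xx^{-1}$ and the analogous closed set $\{(\beta_n,\beta_n):n\in\omega\}$. Once these two continuity checks are in place, $T$ is a topological semigroup containing $B_\lambda$ as a non-closed dense subsemigroup, contradicting the assumption that $B_\lambda$ is $H$-closed and forcing $E(B_\lambda)$ to be compact.
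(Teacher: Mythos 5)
Your proposal is correct and is essentially the paper's own argument: the paper likewise adjoins one new point with annihilating products and gives it the neighbourhood base of tails $\{x\}\cup\{(\alpha_{2k-1},\alpha_{2k})\mid k\geqslant n\}$, where the $(\alpha_i,\alpha_i)$ enumerate an infinite set of idempotents outside a fixed neighbourhood $U(0)$ (your split into two disjoint families $\{\alpha_n\}$, $\{\beta_n\}$ is just the paper's odd/even split), and it uses the same sets $\varphi^{-1}(\cdot)$, $\psi^{-1}(\cdot)$ to handle continuity at zero. If anything, your write-up makes the Hausdorff separation of the new point from $0$ and the continuity checks at $(0,y)$ and $(y,0)$ more explicit than the paper does.
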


\begin{proof}
Suppose the contrary: the band $E(B_\lambda)$ is a non-compact
subset in $(B_\lambda,\tau)$. By \cite[Lemma~4]{GutikPavlyk2005}
any non-zero element of the semigroup $B_\lambda$ is an isolated
point in $(B_\lambda,\tau)$ and hence there exists an open
neighbourhood $U(0)$ of zero $0$ in $(B_\lambda,\tau)$ such that
$A=E(B_\lambda)\setminus\big(E(B_\lambda)\cap U(0)\big)$ is an
infinite subset of $E(B_\lambda)$. Without loss of generality we
can assume that $A$ is countable. Next we enumerate the set $A$ by
positive integers: $A=\{(\alpha_1,\alpha_1), (\alpha_2,\alpha_2),
(\alpha_3,\alpha_3), \ldots\}$. Then $A$ is a closed subset of
$E(B_\lambda)$ and hence the continuity of the inversion in
$(B_\lambda,\tau)$ implies that
$I_A=\varphi^{-1}(E(B_\lambda)\setminus
A)\cup\psi^{-1}(E(B_\lambda)\setminus A)$ is an open subset of the
topological space $(B_\lambda,\tau)$.

Let $x\notin B_\lambda$. Put $S=B_\lambda\cup\{ x\}$. We extend
the semigroup operation from $B_\lambda$ onto $S$ as follows:
\begin{equation*}
    x\cdot x=y\cdot x=x\cdot y=0, \qquad \qquad  \mbox{~for all~}
    \quad y\in B_\lambda.
\end{equation*}
Simple verifications show that such defined operation is
associative.

Put $A_n=\{(\alpha_{2k-1},\alpha_{2k})\mid k=n, n+1, n+2,
\ldots\}$ for any positive integer $n$. We determine a topology
$\tau^*$ on $S$ as follows:
\begin{itemize}
    \item[$(i)$] for every $y\in B_\lambda$ the bases of
                 topologies $\tau$ and $\tau^*$ at $y$ coincide;
                 and
    \item[$(ii)$] $\mathscr{B}(x)=\{ U_n(x)=\{ x\}\cup A_n\mid
                 n=1,2,3,\ldots\}$ is the base of the topology
                 $\tau^*$ at $x$.
\end{itemize}
For any open neighbourhood $V(0)$ of zero $0$ such that
$V(0)\subseteq U(0)$ we have
\begin{equation*}
    V(0)\cdot U_n(0)=U_n(0)\cdot V(0)=U_n(0)\cdot U_n(0)=\{
    0\}\subseteq V(0).
\end{equation*}
We observe that the definition of the set $A_n$ implies that for
any non-zero element $(\alpha,\beta)$ of the semigroup $B_\lambda$
there exists the smallest positive integer $i_{(\alpha,\beta)}$
such that $(\alpha,\beta)\cdot
A_{i_{(\alpha,\beta)}}=A_{i_{(\alpha,\beta)}}\cdot(\alpha,\beta)=\{
0\}$. Then we have
\begin{equation*}
    (\alpha,\beta)\cdot U_{i_{(\alpha,\beta)}}(0)=
    U_{i_{(\alpha,\beta)}}(0)\cdot(\alpha,\beta)=\{ 0\}\subseteq
    V(0).
\end{equation*}
Therefore $(S,\tau^*)$ is a topological semigroup which contains
$(B_\lambda,\tau)$ as a dense subsemigroup. The obtained
contradiction implies that $E(B_\lambda)$ is a compact subset of
$(B_\lambda,\tau)$.
\end{proof}

Since the semigroup of matrix units is congruence-free,
Theorems~\ref{theorem1.3} and \ref{theorem1.5} imply the following
theorem:

\begin{theorem}\label{theorem1.5a}
A topological inverse semigroup $B_\lambda$ is $H$-closed if and
only if the band $E(B_\lambda)$ is compact.
\end{theorem}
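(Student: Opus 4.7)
The plan is to derive the equivalence as a direct synthesis of Theorems~\ref{theorem1.3} and~\ref{theorem1.5}, along the lines of the hint preceding the statement.

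For the implication ``$E(B_\lambda)$ compact $\Rightarrow$ $B_\lambda$ is $H$-closed'', I would simply quote Theorem~\ref{theorem1.3}, which already gives $H$-closedness in the full class of topological semigroups; since a topological inverse semigroup is, in particular, a topological semigroup, that conclusion is even stronger than what the present statement asks for.

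For the converse, I would invoke Theorem~\ref{theorem1.5} directly: assuming $B_\lambda$ is an $H$-closed topological inverse semigroup and arguing by contraposition, if $E(B_\lambda)$ were non-compact then Theorem~\ref{theorem1.5} exhibits an explicit extension $(S,\tau^{*}) = (B_\lambda\cup\{x\},\tau^{*})$ of $(B_\lambda,\tau)$ as a proper dense subsemigroup, contradicting $H$-closedness.

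The only place I would pause is the bookkeeping around what class is being used for ``$H$-closed''. The hypothesis that $B_\lambda$ is a \emph{topological inverse semigroup} (rather than merely a topological semigroup) is precisely what feeds Theorem~\ref{theorem1.5}, where continuity of $\operatorname{inv}$ is needed to ensure that $I_A=\varphi^{-1}(E(B_\lambda)\setminus A)\cup\psi^{-1}(E(B_\lambda)\setminus A)$ is open, which in turn isolates the countable discrete family of idempotents $A$ from a neighbourhood of~$0$. The congruence-freeness of $B_\lambda$ mentioned in the preamble does not seem strictly necessary for the biconditional itself -- its natural role is the upgrade from $H$-closed to \emph{absolutely} $H$-closed carried out in Theorem~\ref{theorem1.4} -- so I would write the proof as essentially two one-line citations (Theorem~\ref{theorem1.3} for $\Leftarrow$, Theorem~\ref{theorem1.5} for $\Rightarrow$) without re-entering the construction.
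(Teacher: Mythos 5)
Your proposal is correct and matches the paper's own derivation, which likewise obtains the theorem as an immediate combination of Theorem~\ref{theorem1.3} (for sufficiency) and Theorem~\ref{theorem1.5} (for necessity). Your side remark is also accurate: the congruence-freeness invoked in the paper's preamble is not actually needed for the biconditional, and the inverse-semigroup hypothesis enters only through Theorem~\ref{theorem1.5}, exactly as you say.
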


Gutik and Pavlyk in \cite{GutikPavlyk2005} show that an infinite
semigroup of matrix units with the discrete topology is not
$H$-closed. The following proposition gives the sufficient
conditions on an infinite topological semigroup of matrix units to
be non-$H$-closed.

\begin{proposition}\label{proposition1.5a}
Let $\tau$ be a semigroup topology on an infinite semigroup of
matrix units $B_\lambda$. If there exists an open neighbourhood
$U(0)$ of zero in $(B_\lambda,\tau)$ such that
$\big(\varphi^{-1}(A)\cup\psi^{-1}(A)\big)\cap U(0)=\varnothing$
for some infinite subset $A$ of $E(B_\lambda)$, then
$(B_\lambda,\tau)$ is not an $H$-closed topological semigroup.
\end{proposition}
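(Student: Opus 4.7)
The plan is to imitate the construction from the proof of Theorem~\ref{theorem1.5}, with the hypothesis on $U(0)$ taking on the role played there by the continuity of inversion. I will assume without loss of generality that $A$ is countable and enumerate it as $A=\{(\alpha_n,\alpha_n)\mid n=1,2,3,\ldots\}$ with distinct indices $\alpha_n\in\lambda$. Fix a point $x\notin B_\lambda$, put $S=B_\lambda\cup\{x\}$, and extend the multiplication to $S$ by declaring
\begin{equation*}
x\cdot x=x\cdot y=y\cdot x=0\qquad\text{for every }y\in B_\lambda;
\end{equation*}
associativity is routine. On $S$ I define a topology $\tau^*$ in which every point of $B_\lambda$ retains its $\tau$-neighborhood base and the basic neighborhoods of $x$ are $U_n(x)=\{x\}\cup A_n$, where $A_n=\{(\alpha_{2k-1},\alpha_{2k})\mid k\geqslant n\}$. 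Since $A_1\subseteq\varphi^{-1}(A)$, the hypothesis gives $A_1\cap U(0)=\varnothing$, which together with Hausdorffness of $(B_\lambda,\tau)$ ensures that $(S,\tau^*)$ is Hausdorff.

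The main step is the verification that the extended multiplication is continuous on $(S,\tau^*)$, and the decisive pairs are $(0,x)$ and $(x,0)$. Given any open $V(0)\subseteq U(0)$, the target is to show $V(0)\cdot U_n(x)=U_n(x)\cdot V(0)=\{0\}$ for every $n$. The key observation is: if $w=(\gamma,\delta)\in V(0)^*$ and $(\alpha_{2k-1},\alpha_{2k})\in A_n$ produce a non-zero right product, then $\delta=\alpha_{2k-1}$, forcing $\psi(w)=(\alpha_{2k-1},\alpha_{2k-1})\in A$, so that $w\in\psi^{-1}(A)\cap U(0)=\varnothing$, a contradiction; the symmetric computation for $U_n(x)\cdot V(0)$ runs through $\varphi^{-1}(A)\cap U(0)=\varnothing$. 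For the pair $(x,x)$ the distinctness of the $\alpha_i$ forces $A_n\cdot A_n=\{0\}$. For mixed pairs $(y,x)$, $(x,y)$ with $y\in B_\lambda^*$, I first note that non-zero points are isolated in any Hausdorff semigroup topology on $B_\lambda$ (for $(\alpha,\beta)\in B_\lambda^*$ and $W\ni(\alpha,\beta)$ open with $0\notin W$, continuity of right multiplication by $(\beta,\beta)$ and left multiplication by $(\alpha,\alpha)$ cuts $W$ down to $\{(\alpha,\beta)\}$); then the column of $y$ coincides with at most one $\alpha_{2k-1}$, so $\{y\}\cdot U_n(x)=\{0\}$ for all sufficiently large $n$, and similarly on the other side.

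Since every $U_n(x)$ meets $B_\lambda$, the semigroup $B_\lambda$ is a proper dense subsemigroup of $(S,\tau^*)$, whence $(B_\lambda,\tau)$ is not $H$-closed. I expect the main obstacle to be the careful bookkeeping that extracts from the single hypothesis both the Hausdorff separation of $x$ from $0$ and the annihilation of all mixed products $V(0)\cdot A_n$ and $A_n\cdot V(0)$; the twin conditions $\varphi^{-1}(A)\cap U(0)=\varnothing$ and $\psi^{-1}(A)\cap U(0)=\varnothing$ are designed to deliver both simultaneously.
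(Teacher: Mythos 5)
Your construction coincides with the paper's own proof: the same one-point extension $S=B_\lambda\cup\{x\}$ with $x$ acting as an annihilator, the same neighbourhood base $U_n(x)=\{x\}\cup A_n$ where $A_n=\{(\alpha_{2k-1},\alpha_{2k})\mid k\geqslant n\}$, and the same case-by-case continuity check ending with density of $B_\lambda$ in $S$. The argument is correct; you only add slightly more explicit bookkeeping (the $\varphi^{-1}(A)$/$\psi^{-1}(A)$ computation for $V(0)\cdot A_n$ and $A_n\cdot V(0)$, and the justification that non-zero elements of $B_\lambda$ are isolated) than the paper records.
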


\begin{proof}
We observe that without loss of generality we can assume that the
set $A$ is countable.

Let $x\notin B_\lambda$. Put $S=B_\lambda\cup\{ x\}$. We extend
the semigroup operation from $B_\lambda$ onto $S$ as follows:
\begin{equation*}
    x\cdot x=x\cdot a=a\cdot x=0 \qquad \mbox{~for all~} \quad a\in
    B_\lambda.
\end{equation*}
Simple verifications show that such defined binary operation is
associative.

Further we enumerate the elements of the set $A$ by the positive
integers, i.~e. $A=\{(\alpha_i,\alpha_i)\mid i=1,2,3,\ldots\}$.
Let $A_n=\{(\alpha_{2k-1},\alpha_{2k})\mid k\geqslant n\}$ for
each positive integer $n$. A topology $\tau_0$ on $S$ is defined
as follows:
\begin{itemize}
    \item[$a)$] bases of topologies $\tau$ and $\tau_0$ coincide
                 at any point $a\in B_\lambda$;
    \item[$b)$] $\mathscr{B}(x)=\{U_n(0)=\{ x\}\cup A_n\mid n$
                 is an positive integer$\}$ is a base of the
                 topology $\tau_0$ at the point $x\in S$.
\end{itemize}
Such defined topology $\tau_0$ on $S$ implies that it is complete
to show that the semigroup operation on $(S,\tau_0)$ is continuous
in the following cases:
\begin{equation*}
    1)~x\cdot x=0, \quad 2)~x\cdot 0=0, \quad 3)~0\cdot x=0, \quad
    4)~x\cdot a=0, \quad 5)~a\cdot x=0, \quad \mbox{for~}\; a\in
    B_\lambda\setminus\{ 0\}.
\end{equation*}
Then
\begin{equation*}
    U_n(x)\cdot U_n(x)=U_n(x)\cdot V(0)=V(0)\cdot U_n(x)=\{
    0\}\subseteq V(0),
\end{equation*}
for any $U_n(x)\in\mathscr{B}(x)$ and any open neighbourhood
$V(0)$ of zero in $S$ such that $V(0)\subseteq U(0)$. For every
$a\in B_\lambda\setminus\{ 0\}$ there exists a positive integer
$j$ such that
\begin{equation*}
\big(\varphi^{-1}(a\cdot a^{-1})\cup\psi^{-1}(a\cdot
a^{-1})\cup\varphi^{-1}(a^{-1}\cdot a)\cup\psi^{-1}(a^{-1}\cdot
a)\big)\cap A_j=\varnothing.
\end{equation*}
Then we have
\begin{equation*}
    \{ a\}\cdot U_j(x)=U_j(x)\cdot\{ x\}=\{ 0\}\subseteq V(0),
\end{equation*}
for any open neighbourhood $V(0)$ of zero in $S$ such that
$V(0)\subseteq U(0)$.

Obviously that $(B_\lambda,\tau)$ is a dense subsemigroup of the
topological semigroup $(S,\tau_0)$.
\end{proof}

\begin{theorem}\label{theorem1.6}
An infinite semigroup of matrix units does not embed into a
countably compact topological semigroup.
\end{theorem}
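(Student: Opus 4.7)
The plan is to argue by contradiction: assume $T$ is a countably compact topological semigroup containing the infinite semigroup of matrix units $B_\lambda$. Replacing $T$ by the closure of $B_\lambda$ in $T$, which remains countably compact as a closed subspace of a countably compact Hausdorff space, I may assume $B_\lambda$ is dense in $T$. Lemma~\ref{lemma1.2}(iii) then gives $E(T) = E(B_\lambda)$, and because the band of any Hausdorff topological semigroup is closed (being the equalizer of the identity and the continuous squaring map), $E(B_\lambda)$ inherits countable compactness as a closed subspace of $T$.

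Next I would apply Lemma~\ref{lemma1.1} to the infinite topological semilattice $E(B_\lambda)$, in which every non-zero idempotent is primitive, to upgrade countable compactness to compactness. Theorem~\ref{theorem1.3} then yields that $B_\lambda$ is $H$-closed, hence closed in $T$, so $B_\lambda = T$. Thus the problem reduces to showing that the infinite semigroup $B_\lambda$ admits no countably compact Hausdorff semigroup topology.

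To this end I would first establish that every non-zero element $(\alpha,\beta)$ of $B_\lambda$ is an isolated point: the continuous map $\phi(x) = (\alpha,\alpha) \cdot x \cdot (\beta,\beta)$ sends $(\alpha,\beta)$ to itself and every other element of $B_\lambda$ to $0$, so $\phi^{-1}(\{0\}) = B_\lambda \setminus \{(\alpha,\beta)\}$ is closed. Fix countably many distinct $\alpha_0, \alpha_1, \alpha_2, \ldots \in \lambda$ and set $x_n = (\alpha_0,\alpha_n)$ and $y_n = (\alpha_n,\alpha_0)$. I would then argue that both sequences converge to $0$: otherwise some open neighbourhood $U \ni 0$ would avoid infinitely many terms, producing an infinite set of distinct isolated points inside the closed subspace $B_\lambda \setminus U$, whose only candidate for an $\omega$-accumulation point would itself have to be isolated, contradicting the very definition of $\omega$-accumulation at an isolated point.

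Continuity of multiplication then forces $x_n y_n \to 0 \cdot 0 = 0$, while $x_n y_n = (\alpha_0,\alpha_0)$ is the constant sequence at the non-zero isolated idempotent $(\alpha_0,\alpha_0)$, which in a Hausdorff space can converge only to itself. This is the desired contradiction. I expect the subtlest point to be the step from "$0$ is the unique possible $\omega$-accumulation point" to genuine sequential convergence $x_n \to 0$; this is exactly where the rigidity forced by the isolation of every non-zero element of $B_\lambda$ -- a consequence of the matrix-unit structure, not of any inverse-semigroup assumption on $T$ -- is indispensable.
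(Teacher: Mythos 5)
Your proof is correct, and its skeleton is the paper's: pass to the closure so that $B_\lambda$ is dense and the ambient semigroup is still countably compact, use Lemma~\ref{lemma1.2}$(iii)$ to see $E(T)=E(B_\lambda)$, note that the band is closed and hence countably compact, upgrade to compactness via Lemma~\ref{lemma1.1}, and apply Theorem~\ref{theorem1.3} to conclude that $B_\lambda$ is closed, hence itself countably compact. The one genuine difference is the final step: the paper disposes of it by citing Theorem~6 of \cite{GutikPavlyk2005} (no countably compact semigroup topology exists on an infinite semigroup of matrix units), whereas you prove that fact from scratch. Your argument there is sound: isolation of each non-zero $(\alpha,\beta)$ follows because $B_\lambda\setminus\{(\alpha,\beta)\}$ is the preimage of the closed set $\{0\}$ under the continuous map $x\mapsto(\alpha,\alpha)x(\beta,\beta)$; the convergence $(\alpha_0,\alpha_n)\to 0$ and $(\alpha_n,\alpha_0)\to 0$ is forced because any neighbourhood of $0$ missing infinitely many terms would leave an infinite closed discrete subspace, contradicting countable compactness; and joint continuity of multiplication then gives $0=\lim(\alpha_0,\alpha_n)(\alpha_n,\alpha_0)=(\alpha_0,\alpha_0)$, a contradiction with Hausdorffness. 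You correctly identified and handled the delicate point, namely that an $\omega$-accumulation point is not automatically a sequential limit. What your version buys is self-containedness (the reader need not consult \cite{GutikPavlyk2005}); what the paper's version buys is brevity and consistency with its general policy of building on the earlier matrix-unit results.
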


\begin{proof}
Suppose to the contrary that there exists a countably compact
topological semigroup $S$ which contains an infinite semigroup of
matrix units $B_\lambda$ for some infinite cardinal $\lambda$.
Since a closed subset of a countably compact space is countably
compact (see \cite[Theorem~3.10.4]{Engelking1989}), without loss
of generality we can assume that $B_\lambda$ is a dense
subsemigroup of $S$. Then by Lemma~\ref{lemma1.2}~$(iii)$, we have
that $E(S)=E(B_\lambda)$. Theorem~1.5~\cite[Vol.~1]{CHK} and
Theorem~3.10.4 of \cite{Engelking1989} implies that $E(B_\lambda)$
is a countably compact band of $S$. By Lemma~\ref{lemma1.1},
$E(B_\lambda)$ is compact and hence by Theorem~\ref{theorem1.3},
$B_\lambda$ is a closed subgroup of $S$. Therefore by
Theorem~3.10.4~\cite{Engelking1989}, $B_\lambda$ is a countably
compact topological semigroup. A contradiction to the fact that on
an infinite semigroup of matrix units there does not exist a
countably compact semigroup topology (see
\cite[Theorem~6]{GutikPavlyk2005}). The obtained contradiction
implies the assertion of the theorem.
\end{proof}

Since the semigroup of matrix units is congruence-free,
Theorem~\ref{theorem1.6} implies the following:

\begin{theorem}\label{theorem1.7}
Any continuous homomorphism of an infinite semigroup of matrix
units into a countably compact topological semigroup is
annihilating.
\end{theorem}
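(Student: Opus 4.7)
The plan is to combine Theorem~\ref{theorem1.6} with the fact that $B_\lambda$ is congruence-free (Theorem~1 of \cite{Gluskin1955}, already invoked just before Theorem~\ref{theorem1.4}). Recall the observation made in the definition of congruence-freeness: a homomorphism from a congruence-free semigroup into any semigroup is either injective (an ``isomorphism into'') or annihilating. So the whole proof reduces to ruling out the injective case.

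Concretely, I would let $h\colon B_\lambda\to T$ be a continuous homomorphism into a countably compact topological semigroup $T$, and invoke congruence-freeness of $B_\lambda$ to dichotomize. In the annihilating case there is nothing to prove. In the remaining case, $h$ is injective as an abstract semigroup homomorphism, so the image $h(B_\lambda)$ is a subsemigroup of $T$ which is isomorphic as an abstract semigroup to $B_\lambda$; in particular $T$ contains an algebraic copy of the infinite semigroup of matrix units as a subsemigroup. This directly contradicts Theorem~\ref{theorem1.6}, which rules out the existence of any countably compact topological semigroup containing $B_\lambda$ as a subsemigroup. The desired conclusion follows.

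There is essentially no obstacle: the work has already been done in Theorem~\ref{theorem1.6}, and the structural ingredient (congruence-freeness) is quoted. The one subtle point worth flagging is that Theorem~\ref{theorem1.6} is a purely algebraic non-embedding result — it does not require $h$ to be a topological embedding or even a homeomorphism onto its image; it only needs $T$ to contain $B_\lambda$ as a (set-theoretic) subsemigroup, which is exactly what injectivity of $h$ delivers. Thus continuity of $h$ is not explicitly used in the contradiction step — it is needed only implicitly, through the fact that $h$ is a homomorphism of semigroups and that $B_\lambda$ is infinite (so its injective image is infinite).
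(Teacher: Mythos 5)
Your proposal is correct and is exactly the paper's argument: the authors derive Theorem~\ref{theorem1.7} in one line from congruence-freeness of $B_\lambda$ (so a non-annihilating homomorphism is injective) together with the non-embedding result of Theorem~\ref{theorem1.6}. Your remark that only the algebraic containment of $h(B_\lambda)$ in $T$ is needed matches how Theorem~\ref{theorem1.6} is stated and used.
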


Since the semigroup of matrix units $B_\lambda$ is isomorphic to
the semigroup $\mathscr{I}_\lambda^{1}$ and
$\mathscr{I}_\lambda^{1}$ is a subsemigroup of
$\mathscr{I}_\lambda^{n}$ for all cardinals $\lambda\geqslant 1$,
Theorem~\ref{theorem1.7} implies the following:

\begin{corollary}\label{corollary1.9}
Let $\lambda$ be an infinite cardinal and $n$ be a positive
integer. Then there exists no a countably compact topological
semigroup $S$ which contains $\mathscr{I}_\lambda^{n}$.
\end{corollary}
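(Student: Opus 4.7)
The plan is to argue by contradiction, reducing the statement for $\mathscr{I}_\lambda^n$ to the already established case $n=1$ via the subsemigroup inclusion $\mathscr{I}_\lambda^1 \hookrightarrow \mathscr{I}_\lambda^n$.

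First, I would suppose that some countably compact topological semigroup $S$ contains $\mathscr{I}_\lambda^n$ as a subsemigroup. Since the excerpt records that $\mathscr{I}_\lambda^1$ is a subsemigroup of $\mathscr{I}_\lambda^n$ for every positive integer $n$ and every cardinal $\lambda\geqslant 1$, and that $\mathscr{I}_\lambda^1$ is isomorphic to the semigroup of matrix units $B_\lambda$, I obtain an isomorphic copy of $B_\lambda$ sitting inside $S$. Equipping this copy with the subspace topology inherited from $S$, the inclusion map $i\colon B_\lambda\hookrightarrow S$ is a continuous homomorphism from a topological semigroup into the countably compact topological semigroup $S$, and it is injective by construction.

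Next, I would invoke Theorem~\ref{theorem1.7}: every continuous homomorphism of an infinite semigroup of matrix units into a countably compact topological semigroup is annihilating, i.e.\ there exists $c\in S$ with $i(a)=c$ for all $a\in B_\lambda$. Since $\lambda$ is infinite the semigroup $B_\lambda$ has more than one element, which contradicts the injectivity of the inclusion. This contradiction yields the corollary.

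There is essentially no obstacle here beyond writing down the reduction cleanly. The only point that deserves mention is that passing to the subspace topology on $\mathscr{I}_\lambda^1\subseteq \mathscr{I}_\lambda^n\subseteq S$ is enough to guarantee continuity of the composed inclusion, so Theorem~\ref{theorem1.7} applies without any additional hypothesis on how $\mathscr{I}_\lambda^n$ is topologized inside $S$. (Alternatively, one could bypass Theorem~\ref{theorem1.7} entirely and invoke Theorem~\ref{theorem1.6} directly to rule out the embedding $B_\lambda\hookrightarrow S$, but the Theorem~\ref{theorem1.7} route matches the statement's placement in the paper.)
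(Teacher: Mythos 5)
Your proposal is correct and follows exactly the paper's route: the paper derives the corollary in one line from Theorem~\ref{theorem1.7} via the chain $B_\lambda\cong\mathscr{I}_\lambda^{1}\subseteq\mathscr{I}_\lambda^{n}\subseteq S$, which is precisely your reduction (an injective continuous homomorphism of the infinite $B_\lambda$ into a countably compact semigroup cannot be annihilating). Your closing remark that Theorem~\ref{theorem1.6} would serve equally well is also accurate.
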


\begin{question}\label{question1.10}
Is any $H$-closed semigroup topology on the infinite semigroup of
matrix units absolutely $H$-closed?
\end{question}

\begin{theorem}\label{theorem1.11}
Let $\lambda$ be an infinite cardinal and $n$ be a positive
integer. Then every continuous homomorphism of the topological
semigroup $\mathscr{I}_\lambda^{n}$ into a countably compact
topological semigroup is annihilating.
\end{theorem}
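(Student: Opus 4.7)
The plan is to bootstrap from Theorem~\ref{theorem1.7} by showing that the level set $h^{-1}(h(0))$, which will be an ideal of $\mathscr{I}_\lambda^n$, is in fact all of $\mathscr{I}_\lambda^n$.

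Let $h\colon\mathscr{I}_\lambda^n\to T$ be a continuous homomorphism into a countably compact topological semigroup. Since $\mathscr{I}_\lambda^1$ is a subsemigroup of $\mathscr{I}_\lambda^n$ isomorphic to the infinite semigroup of matrix units $B_\lambda$, Theorem~\ref{theorem1.7} applied to $h|_{\mathscr{I}_\lambda^1}$ gives $h|_{\mathscr{I}_\lambda^1}\equiv c$, where $c=h(0)$. Because $0$ is a two-sided absorbing element of $\mathscr{I}_\lambda^n$, we have $c\cdot h(\alpha)=h(\alpha)\cdot c=c$ for every $\alpha\in\mathscr{I}_\lambda^n$, so $h^{-1}(c)$ is a two-sided ideal of $\mathscr{I}_\lambda^n$ containing $\mathscr{I}_\lambda^1$. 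A standard verification shows that the ideals of $\mathscr{I}_\lambda^n$ are precisely $\mathscr{I}_\lambda^0=\{0\},\mathscr{I}_\lambda^1,\ldots,\mathscr{I}_\lambda^n$ (the principal ideal of any rank-$r$ element equals $\mathscr{I}_\lambda^r$, as one can realise every rank-$\leqslant r$ partial bijection by multiplying on both sides with suitable elements). Therefore $h^{-1}(c)=\mathscr{I}_\lambda^k$ for some $1\leqslant k\leqslant n$, and the task reduces to ruling out $k<n$.

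Assume $k<n$ and fix a $k$-element subset $C\subseteq\lambda$. For $x,y\in\lambda\setminus C$ let $e^C_{x,y}\in\mathscr{I}_\lambda^n$ denote the rank-$(k+1)$ partial bijection with $\operatorname{dom}=C\cup\{x\}$ and $\operatorname{ran}=C\cup\{y\}$ which is the identity on $C$ and sends $x$ to $y$. A direct composition yields
\begin{equation*}
  e^C_{x_1,y_1}\cdot e^C_{x_2,y_2}=
  \begin{cases}
    e^C_{x_2,y_1}, & \text{if } y_2=x_1,\\
    \operatorname{id}_C, & \text{otherwise},
  \end{cases}
\end{equation*}
and $\operatorname{id}_C$ has rank $k$, hence lies in $\mathscr{I}_\lambda^k=h^{-1}(c)$. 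Consequently, the map $\phi\colon B_{\lambda\setminus C}\to T$ defined by $\phi(0)=c$ and $\phi((a,b))=h(e^C_{b,a})$ is a well-defined semigroup homomorphism. Since $|\lambda\setminus C|=\lambda$, the semigroup $B_{\lambda\setminus C}$ is an infinite semigroup of matrix units; equipping it with the discrete topology makes $\phi$ trivially continuous, so Theorem~\ref{theorem1.7} forces $\phi\equiv c$. This gives $h(e^C_{b,a})=c$ for all $a,b\in\lambda\setminus C$, contradicting $h^{-1}(c)=\mathscr{I}_\lambda^k$ since each $e^C_{b,a}$ has rank $k+1$. Hence $k=n$, which means $h\equiv c$.

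The main obstacle is the construction of the family $\{e^C_{x,y}\}$, which realises a copy of $B_\lambda$ only \emph{modulo} the ideal $\mathscr{I}_\lambda^k$: the price for this copy not being a subsemigroup of $\mathscr{I}_\lambda^n$ in the strict sense is that the ``matching-failure'' products collapse to $\operatorname{id}_C\in\mathscr{I}_\lambda^k$, and this is exactly why the hypothesis $h|_{\mathscr{I}_\lambda^k}\equiv c$ is needed to make $\phi$ a homomorphism. Once this is arranged, the theorem is a direct consequence of Theorem~\ref{theorem1.7}.
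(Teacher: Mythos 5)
Your argument is correct, but it takes a genuinely different route from the paper's. The paper proves Theorem~\ref{theorem1.11} by induction on $n$: given $\alpha$ of rank $i\geqslant 2$ and $y_1\in\operatorname{ran}\alpha$, it considers the subsemigroup $T_{y_1}$ of all partial bijections fixing $y_1$, which is isomorphic to $\mathscr{I}_\lambda^{n-1}$ with the rank-one idempotent at $y_1$ as its zero; the inductive hypothesis forces $h$ to be constant on $T_{y_1}$ with value $h(0)$, and the factorization $\alpha=\alpha\gamma$ with $\gamma$ the identity on $\operatorname{ran}\alpha$ (an element of $T_{y_1}$) finishes the step. You instead note that $h^{-1}(h(0))$ is an ideal, invoke the (correct, standard) fact that the ideals of $\mathscr{I}_\lambda^n$ form the chain $\{0\}\subset\mathscr{I}_\lambda^1\subset\cdots\subset\mathscr{I}_\lambda^n$, and rule out $h^{-1}(h(0))=\mathscr{I}_\lambda^k$ for $k<n$ by applying Theorem~\ref{theorem1.7} to the family $\{e^C_{x,y}\}\cup\{\operatorname{id}_C\}$ over a fixed rank-$k$ core $C$. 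One remark: that family is in fact an honest subsemigroup of $\mathscr{I}_\lambda^n$ isomorphic to the semigroup of matrix units $B_\lambda$ --- its zero is simply $\operatorname{id}_C$ rather than the empty map --- so your $\phi$ is just $h$ restricted to a genuine copy of $B_\lambda$, and the closing caveat about it only being a copy ``modulo the ideal'' undersells your own construction. (Also, your displayed product formula uses the composition order opposite to the paper's convention $x(\alpha\beta)=(x\alpha)\beta$, but your index bookkeeping in $\phi((a,b))=h(e^C_{b,a})$ is consistent with your own formula, so nothing breaks.) Both proofs ultimately reduce to Theorem~\ref{theorem1.7} for the discrete topology and hence prove the purely algebraic statement; the paper's induction is shorter and needs no knowledge of the ideal lattice, while your version makes the rank filtration of $\mathscr{I}_\lambda^n$ and the matrix-unit structure of each layer explicit, which generalizes more readily to other statements of the form ``every homomorphism into a semigroup from class $\mathscr{K}$ is annihilating'' once the rank-one case is known.
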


\begin{proof}
We shall prove the assertion of the theorem by induction. By
Theorem~\ref{theorem1.7} every continuous homomorphism of the
topological semigroup $\mathscr{I}_\lambda^{1}$ into a countably
compact topological semigroup $S$ is annihilating. We suppose that
the assertion of the theorem holds for $n=1,2,\ldots,k-1$ and we
shall prove that it is true for $n=k$.

Obviously it is sufficiently to show that the statement of the
theorem holds for the discrete semigroup
$\mathscr{I}_\lambda^{k}$. Let
$h\colon\mathscr{I}_\lambda^{k}\rightarrow S$ be arbitrary
homomorphism from $\mathscr{I}_\lambda^{k}$ with the discrete
topology into a countably compact topological semigroup $S$. Then
by Theorem~\ref{theorem1.7} the restriction
$h_{\mathscr{I}_\lambda^{1}}\colon\mathscr{I}_\lambda^{1}
\rightarrow S$ of homomorphism $h$ onto the subsemigroup
$\mathscr{I}_\lambda^{1}$ of $\mathscr{I}_\lambda^{k}$ is an
annihilating homomorphisms. Let
$(\mathscr{I}_\lambda^{1})h_{\mathscr{I}_\lambda^{1}}=
(\mathscr{I}_\lambda^{1})h=e$, where $e\in E(S)$. We fix any
$\alpha\in\mathscr{I}_\lambda^{k}$ with
$\operatorname{ran}(\alpha)=i\geqslant 2$. Let
 $\alpha=
 \left(%
 \begin{array}{cccc}
  x_1 & x_2 & \cdots & x_i \\
  y_1 & y_2 & \cdots & y_i \\
 \end{array}%
 \right)
 $
(where $x_1, x_2,\ldots, x_i, y_1, y_2,\ldots, y_i\in X$ for some
set $X$ of cardinality $\lambda$). We fix $y_1\in X$ and define
subsemigroup $T_{y_1}$ of $\mathscr{I}_\lambda^{k}$ as follows:
\begin{equation*}
    T_{y_1}=\left\{\beta\in\mathscr{I}_\lambda^{k}\mid
      \Big(%
     \begin{array}{c}
      y_1\\
      y_1\\
     \end{array}%
      \Big)\cdot\beta=\beta\cdot
      \Big(%
     \begin{array}{c}
      y_1\\
      y_1\\
     \end{array}%
      \Big)=
      \Big(%
     \begin{array}{c}
      y_1\\
      y_1\\
     \end{array}%
      \Big)
    \right\}.
\end{equation*}
Then the semigroup $T_{y_1}$ is isomorphic to the semigroup
$\mathscr{I}_\lambda^{k-1}$, the element
 $\Big(%
     \begin{array}{c}
      y_1\\
      y_1\\
     \end{array}%
      \Big)
 $
is zero of $T_{y_1}$ and hence by induction assumption we have
$\Big(\Big(%
     \begin{array}{c}
      y_1\\
      y_1\\
     \end{array}%
      \Big)\Big)h=(\beta)h
 $
for all $\beta\in T_{y_1}$.

Since
 $\Big(%
     \begin{array}{c}
      y_1\\
      y_1\\
     \end{array}%
      \Big)\in\mathscr{I}_\lambda^{1}
 $,
we have that $(\beta)h=(0)h$ for all $\beta\in T_{y_1}$. But
$\alpha=\alpha\gamma$, where
 $\gamma=
 \left(%
 \begin{array}{cccc}
  y_1 & y_2 & \cdots & y_i \\
  y_1 & y_2 & \cdots & y_i \\
 \end{array}%
 \right)\in T_{y_1}
 $,
and hence we have
\begin{equation*}
    (\alpha)h=(\alpha\gamma)h=(\alpha)h\cdot(\gamma)h=
    (\alpha)h\cdot(0)h=(\alpha\cdot 0)h=(0)h=e.
\end{equation*}
This completes the proof of the theorem.
\end{proof}

\begin{theorem}\label{theorem1.12}
An infinite semigroup of matrix units does not embed into a
Tychonoff topological semigroup with the pseudo-compact square.
\end{theorem}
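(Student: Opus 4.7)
The strategy is to lift $S$ to its Stone-\v{C}ech compactification $\beta S$, show that the multiplication extends continuously and associatively to $\beta S$ by invoking Glicksberg's theorem, and then apply Theorem~\ref{theorem1.6} to conclude. Suppose to the contrary that $B_\lambda$, with $\lambda$ an infinite cardinal, embeds as a subsemigroup of some Tychonoff topological semigroup $S$ with $S\times S$ pseudocompact.

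First, by Glicksberg's theorem, pseudocompactness of $S\times S$ together with the Tychonoff property of $S$ yields the canonical identification $\beta(S\times S)=\beta S\times\beta S$. The continuous multiplication $\mu\colon S\times S\to S$, composed with the inclusion $S\hookrightarrow\beta S$, then admits a unique continuous extension $\tilde\mu\colon\beta(S\times S)\to\beta S$ by the universal property of the Stone-\v{C}ech compactification (the target $\beta S$ is compact Hausdorff). Under the identification above, this is a continuous binary operation $\tilde\mu\colon\beta S\times\beta S\to\beta S$ extending $\mu$.

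Next, associativity of $\tilde\mu$ is checked by a standard density argument: the two continuous maps $(a,b,c)\mapsto\tilde\mu(\tilde\mu(a,b),c)$ and $(a,b,c)\mapsto\tilde\mu(a,\tilde\mu(b,c))$ from $\beta S\times\beta S\times\beta S$ to $\beta S$ coincide on the dense subspace $S\times S\times S$ by associativity of $\mu$, hence coincide everywhere since $\beta S$ is Hausdorff. Thus $\beta S$ is a compact Hausdorff topological semigroup containing $B_\lambda$ as a subsemigroup (the subspace topology on $B_\lambda$ inherited from $S$ is unchanged by passage to $\beta S$).

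Finally, since compact spaces are countably compact, Theorem~\ref{theorem1.6} prohibits the embedding of an infinite semigroup of matrix units into $\beta S$, yielding the desired contradiction. The only nontrivial ingredient beyond Theorem~\ref{theorem1.6} is the appeal to Glicksberg's theorem; once one has $\beta(S\times S)=\beta S\times\beta S$, the remainder is the routine universal-property-plus-density argument for extending an associative operation to a compactification.
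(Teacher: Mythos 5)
Your proof is correct and follows essentially the same route as the paper: both pass to the Stone--\v{C}ech compactification, use pseudocompactness of $S\times S$ to extend the multiplication to a continuous associative operation on the compact space $\beta S$, and then invoke the non-embeddability of an infinite $B_\lambda$ into a compact (hence countably compact) topological semigroup. The only cosmetic difference is that the paper outsources the extension step to a cited theorem of Banakh and Dimitrova and concludes via the compact-case non-embedding result of Gutik--Pavlyk, whereas you unpack the extension directly from Glicksberg's theorem plus the universal property and conclude via Theorem~\ref{theorem1.6}; both are valid instances of the same argument.
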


\begin{proof}
Suppose to the contrary: there exists a Tychonoff topological
semigroup $S$ with the pseudo-compact square $S\times S$ which
contains the infinite semigroup of matrix units $B_\lambda$ for
some $\lambda\geqslant\omega$. By
Theorem~1.3~\cite{BanakhDimitrova2009xx} for any topological
semigroup $S$ with the pseudocompact square $S\times S$ the
semigroup operation $\mu\colon S\times S\rightarrow S$ extends to
a continuous semigroup operation $\beta\mu\colon\beta S\times\beta
S\rightarrow\beta S$ and the map $\beta\colon S\rightarrow\beta S$
is a homeomorphism ``into''. Therefore the restriction
$\beta|_{B_\lambda}\colon B_\lambda\rightarrow\beta S$ is an
embedding of a semigroup $B_\lambda$ into a compact topological
semigroup $\beta S$. This contradicts
Theorem~10~\cite{GutikPavlyk2005}. The obtained contradiction
implies the statement of the theorem.
\end{proof}

Since the semigroup of matrix units is congruence-free,
Theorem~\ref{theorem1.12} implies the following:

\begin{theorem}\label{theorem1.13}
Any continuous homomorphism of an infinite semigroup of matrix
units into a Tychonoff topological semigroup with the
pseudo-compact square is annihilating.
\end{theorem}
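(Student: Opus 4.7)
The plan is to combine the congruence-freeness of $B_\lambda$ with the non-embedding result already obtained in Theorem~\ref{theorem1.12}. First I would recall, citing Gluskin's theorem as was done before Theorem~\ref{theorem1.7}, that $B_\lambda$ admits only the identity and universal congruences. Equivalently, any semigroup homomorphism out of $B_\lambda$ is either injective (an isomorphism onto its image) or collapses all of $B_\lambda$ to a single idempotent, i.e.\ is annihilating.

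Now I would set up the argument by contradiction. Let $h\colon B_\lambda\to S$ be a continuous homomorphism, where $S$ is a Tychonoff topological semigroup with $S\times S$ pseudocompact. Assuming $h$ is not annihilating, the dichotomy above forces $h$ to be injective. Then $h(B_\lambda)$ is a subsemigroup of $S$ that is algebraically isomorphic to the infinite semigroup of matrix units $B_\lambda$. This realizes $B_\lambda$ as a subsemigroup of a Tychonoff topological semigroup with pseudocompact square, in direct conflict with Theorem~\ref{theorem1.12}. That contradiction yields the assertion.

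I do not expect any genuine obstacle here: the proof is a clean two-step application of (i) the Gluskin congruence-freeness dichotomy and (ii) the preceding Theorem~\ref{theorem1.12}. Note that continuity of $h$ plays no essential role in the logical chain — only the algebraic embedding is used — and the hard analytic content (the extension of multiplication to $\beta S$ via Banakh--Dimitrova) has already been absorbed into Theorem~\ref{theorem1.12}. The present theorem is the natural homomorphism-level corollary, exactly parallel to the way Theorem~\ref{theorem1.7} was deduced from Theorem~\ref{theorem1.6}.
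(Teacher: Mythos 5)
Your proposal is correct and matches the paper's own reasoning: the paper derives Theorem~\ref{theorem1.13} directly from the congruence-freeness of $B_\lambda$ (Gluskin's dichotomy) together with Theorem~\ref{theorem1.12}, exactly as you do. Your added observation that continuity of $h$ is not needed is accurate, since Theorem~\ref{theorem1.12} only requires $B_\lambda$ to sit inside $S$ as an abstract subsemigroup.
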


Since the semigroup of matrix units $B_\lambda$ is isomorphic to
the semigroup $\mathscr{I}_\lambda^{1}$ and
$\mathscr{I}_\lambda^{1}$ is a subsemigroup of
$\mathscr{I}_\lambda^{n}$ for all cardinals $\lambda\geqslant 1$,
Theorem~\ref{theorem1.13} implies the following:

\begin{corollary}\label{corollary1.14}
Let $\lambda$ be an infinite cardinal and $n$ be a positive
integer. Then there exists no a Tychonoff topological semigroup
$S$ with the pseudo-compact square which contains
$\mathscr{I}_\lambda^{n}$.
\end{corollary}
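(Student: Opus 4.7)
The plan is to reduce to Theorem~\ref{theorem1.12}, which already rules out the embedding of an infinite semigroup of matrix units into a Tychonoff topological semigroup with pseudocompact square. The key observation I would exploit is exactly the one announced in the paragraph preceding the corollary: the semigroup $B_\lambda$ of $\lambda\times\lambda$-matrix units is isomorphic to $\mathscr{I}_\lambda^{1}$, and $\mathscr{I}_\lambda^{1}$ sits inside $\mathscr{I}_\lambda^{n}$ as a subsemigroup (indeed, as an ideal) for every positive integer $n$.

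I would argue by contradiction. Assume $S$ is a Tychonoff topological semigroup with pseudocompact square $S\times S$ containing $\mathscr{I}_\lambda^{n}$ as a subsemigroup. Restricting the inclusion to $\mathscr{I}_\lambda^{1}\subseteq\mathscr{I}_\lambda^{n}$ and using the isomorphism $\mathscr{I}_\lambda^{1}\cong B_\lambda$, I obtain an embedding of the infinite semigroup of matrix units $B_\lambda$ into $S$, since $\lambda$ is infinite. This directly contradicts Theorem~\ref{theorem1.12}.

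Alternatively, one can route the argument through the homomorphism version Theorem~\ref{theorem1.13}: the embedding restricts to a continuous injective homomorphism $B_\lambda\hookrightarrow S$, which by Theorem~\ref{theorem1.13} must be annihilating, contradicting injectivity since $B_\lambda$ is infinite. Either route is immediate and there is no genuine obstacle; the entire content of the corollary lies in the prior embedding/homomorphism theorem for $B_\lambda$, and the step from $n=1$ to arbitrary $n$ is purely algebraic, handled by the ideal inclusion $\mathscr{I}_\lambda^{1}\subseteq\mathscr{I}_\lambda^{n}$.
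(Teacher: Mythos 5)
Your proposal is correct and is essentially the paper's own argument: the paper derives the corollary in one line from the chain $B_\lambda\cong\mathscr{I}_\lambda^{1}\subseteq\mathscr{I}_\lambda^{n}$ together with the non-embeddability result (the paper formally cites Theorem~\ref{theorem1.13}, you primarily cite Theorem~\ref{theorem1.12}, but these are the same reduction). No gaps.
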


The proof of the following theorem is similar to
Theorem~\ref{theorem1.11}.

\begin{theorem}\label{theorem1.15}
Let $\lambda$ be an infinite cardinal and $n$ be a positive
integer. Then every continuous homomorphism of the topological
semigroup $\mathscr{I}_\lambda^{n}$ into a Tychonoff topological
semigroup with the pseudo-compact square is annihilating.
\end{theorem}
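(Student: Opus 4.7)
The plan is to follow the inductive strategy of the proof of Theorem~\ref{theorem1.11} essentially verbatim, replacing Theorem~\ref{theorem1.7} with Theorem~\ref{theorem1.13} as the base case. The induction is on the rank parameter $n$. For $n=1$, the isomorphism $\mathscr{I}_\lambda^{1}\cong B_\lambda$ together with Theorem~\ref{theorem1.13} immediately yields that every continuous homomorphism of $\mathscr{I}_\lambda^{1}$ into a Tychonoff topological semigroup with pseudocompact square is annihilating.

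For the inductive step, I would assume the statement for $n = 1, 2, \ldots, k-1$ and let $h\colon \mathscr{I}_\lambda^{k}\to S$ be a continuous homomorphism into a Tychonoff topological semigroup $S$ with pseudocompact square $S\times S$. As in Theorem~\ref{theorem1.11}, it suffices to treat the case of the discrete topology on $\mathscr{I}_\lambda^{k}$, so that continuity of all restrictions is automatic. The base case applied to $h|_{\mathscr{I}_\lambda^{1}}$ produces an idempotent $e\in E(S)$ with $h(\mathscr{I}_\lambda^{1})=\{e\}$. For an arbitrary $\alpha\in\mathscr{I}_\lambda^{k}$ of rank $i\geqslant 2$, I fix an element $y_1\in\operatorname{ran}(\alpha)$ and form the subsemigroup $T_{y_1}$ exactly as in the proof of Theorem~\ref{theorem1.11}. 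Since $T_{y_1}\cong\mathscr{I}_\lambda^{k-1}$ and its zero $\bigl(\begin{array}{c} y_1 \\ y_1 \end{array}\bigr)$ lies in $\mathscr{I}_\lambda^{1}$, the induction hypothesis applied to $h|_{T_{y_1}}$ forces $h(\beta)=e$ for every $\beta\in T_{y_1}$. Writing $\alpha=\alpha\gamma$ for the appropriate idempotent $\gamma\in T_{y_1}$ then gives
\begin{equation*}
    h(\alpha)=h(\alpha)\cdot h(\gamma)=h(\alpha)\cdot e = h(\alpha\cdot 0)=h(0)=e,
\end{equation*}
which is the desired annihilation.

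The only genuinely new ingredient compared with Theorem~\ref{theorem1.11} is the base case, and this is already supplied by Theorem~\ref{theorem1.13}, whose argument passes through the Stone-\v{C}ech extension theorem of Banakh and Dimitrova for topological semigroups with pseudocompact square. Once that base case is available, the inductive scaffolding is purely algebraic in the discrete setting and transfers with no modification, so I anticipate no real obstacle beyond bookkeeping.
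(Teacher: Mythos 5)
Your proposal is correct and matches the paper's intent exactly: the paper's entire proof of this theorem is the remark that it is ``similar to Theorem~\ref{theorem1.11}'', and you have carried out precisely that induction with Theorem~\ref{theorem1.13} substituted for Theorem~\ref{theorem1.7} in the base case. The reduction to the discrete case and the use of the subsemigroup $T_{y_1}\cong\mathscr{I}_\lambda^{k-1}$ are exactly as in the paper.
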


%%%%%%%%%%%%%%%%%%%%%%%%%%%%%%%%%%%%%%%%%%%%%%%%%%%%%%%%%%%%%%%%%%%%%

\section{$H$-closed semigroup topologies on
$\mathscr{I}_\lambda^{n}$}

Let $X$ be a set of cardinality $\lambda$. Fix any positive
integer $n$ and put
\begin{equation*}
\exp_n(\lambda)=\{ A\subseteq X\colon |A|\leqslant n\}.
\end{equation*}
Then $\exp_n(\lambda)$ with the operation ``$\cap$'' is a
semilattice. Further by $\exp_n(\lambda)$ we denote the
semilattice $(\exp_n(\lambda),\cap)$.

\begin{proposition}\label{proposition2.1}
For any cardinal $\lambda\geqslant 1$ and any positive integer $n$
the band $E(\mathscr{I}_\lambda^{n})$ of the semigroup
$\mathscr{I}_\lambda^{n}$ is isomorphic to the semilattice
$\exp_n(\lambda)$.
\end{proposition}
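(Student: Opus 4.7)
The plan is to produce an explicit isomorphism by sending each idempotent of $\mathscr{I}_\lambda^{n}$ to its domain. Recall that in the symmetric inverse semigroup $\mathscr{I}_\lambda$, a partial bijection $\varepsilon$ is idempotent if and only if $\varepsilon$ is the identity map on $\operatorname{dom}(\varepsilon)$; this is a standard fact about $\mathscr{I}_\lambda$ (see \cite{CP}). Since the rank of a partial identity equals the cardinality of its domain, the idempotents of $\mathscr{I}_\lambda^{n}$ are precisely the partial identities $\operatorname{id}_A$ with $A\subseteq X$ and $|A|\leqslant n$, together with the empty transformation $0=\operatorname{id}_\varnothing$.

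First I would define $\Phi\colon E(\mathscr{I}_\lambda^{n})\to\exp_n(\lambda)$ by $\Phi(\varepsilon)=\operatorname{dom}(\varepsilon)$. Well-definedness follows from the observation above. Injectivity is immediate, since a partial identity is determined by its domain, and surjectivity is clear, since for every $A\in\exp_n(\lambda)$ the partial identity $\operatorname{id}_A$ lies in $E(\mathscr{I}_\lambda^{n})$ and is sent to $A$ by $\Phi$.

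It remains to verify that $\Phi$ is a semilattice homomorphism. Given two idempotents $\operatorname{id}_A$ and $\operatorname{id}_B$ with $A,B\in\exp_n(\lambda)$, the definition of multiplication in $\mathscr{I}_\lambda$ gives
\begin{equation*}
\operatorname{dom}(\operatorname{id}_A\cdot\operatorname{id}_B)
=\{x\in\operatorname{dom}(\operatorname{id}_A)\mid x\operatorname{id}_A\in\operatorname{dom}(\operatorname{id}_B)\}
=A\cap B,
\end{equation*}
and on this set the composition acts as the identity, so $\operatorname{id}_A\cdot\operatorname{id}_B=\operatorname{id}_{A\cap B}$. Hence $\Phi(\operatorname{id}_A\cdot\operatorname{id}_B)=A\cap B=\Phi(\operatorname{id}_A)\cap\Phi(\operatorname{id}_B)$, which is the required multiplicativity. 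This verification is entirely routine; there is no real obstacle, the only point requiring any care is to remember to include the empty transformation $0$ on the semigroup side and $\varnothing$ on the semilattice side so that both structures have a bottom element and $\Phi$ respects it.
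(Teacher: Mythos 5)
Your proof is correct and takes exactly the same route as the paper, which simply defines the isomorphism $h(\alpha)=\operatorname{dom}\alpha$ and leaves the verification to the reader. You have merely filled in the routine details (idempotents are partial identities, domains compose by intersection), so there is nothing further to add.
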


\begin{proof}
An isomorphism $h\colon
E(\mathscr{I}_\lambda^{n})\rightarrow\exp_n(\lambda)$ we define by
the formula: $h(\alpha)=\operatorname{dom}\alpha$.
\end{proof}

An element $e$ of a topological semilattice $E$ is called a
\emph{local minimum} if there exists an open neighbourhood $U(e)$
of $e$ such that
$U(e)\cap{\downarrow}e\subseteq{\uparrow}e$~\cite{HofmannMisloveStralka}.

\begin{lemma}\label{lemma2.2}
Let $n$ be a positive integer and $\lambda\geqslant 1$. If $\tau$
is a semigroup topology on $\exp_n(\lambda)$, then any idempotent
of $\exp_n(\lambda)$ is a local minimum and hence ${\uparrow}e$ is
an open subset of $(\exp_n(\lambda),\tau)$.
\end{lemma}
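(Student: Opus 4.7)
The plan is to exploit the fact that in $\exp_n(\lambda)$, the principal down-set ${\downarrow}e = \{A \in \exp_n(\lambda) \mid A \subseteq e\}$ is \emph{finite} (it has cardinality at most $2^{|e|} \leq 2^n$), since $|e| \leq n$. Combined with the Hausdorff assumption on all spaces in the paper, this will let me isolate $e$ from the rest of ${\downarrow}e$, and then the semigroup continuity will bootstrap this to openness of ${\uparrow}e$.

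First I would verify that ${\downarrow}e \cap {\uparrow}e = \{e\}$: if $A \subseteq e$ and $e \subseteq A$ then $A = e$. So to prove $e$ is a local minimum it suffices to find an open neighbourhood $U(e)$ disjoint from the finite set ${\downarrow}e \setminus \{e\}$. Since $(\exp_n(\lambda),\tau)$ is Hausdorff, for each of the finitely many $A \in {\downarrow}e \setminus \{e\}$ we can choose an open neighbourhood $U_A(e)$ with $A \notin U_A(e)$; their finite intersection $U(e)$ then satisfies $U(e) \cap {\downarrow}e = \{e\} \subseteq {\uparrow}e$, so $e$ is a local minimum.

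Next I would show that ${\uparrow}e$ is open using continuity of the semilattice operation. Pick any $f \in {\uparrow}e$, so $e \cdot f = e \cap f = e$. By continuity of multiplication at $(e,f)$, there is an open neighbourhood $W(f)$ of $f$ with $e \cdot W(f) \subseteq U(e)$, where $U(e)$ is the neighbourhood produced in the previous step. For any $g \in W(f)$, the element $e \cdot g = e \cap g$ lies in ${\downarrow}e$ because $(e \cdot g) \cdot e = e \cdot g$ by commutativity and idempotence. Hence
\begin{equation*}
    e \cdot g \in U(e) \cap {\downarrow}e = \{e\},
\end{equation*}
which forces $e \cdot g = e$, i.e.\ $e \subseteq g$, i.e.\ $g \in {\uparrow}e$. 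Therefore $W(f) \subseteq {\uparrow}e$, and since $f$ was arbitrary, ${\uparrow}e$ is open.

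The argument is largely routine once one notices that the chief structural fact is the finiteness of ${\downarrow}e$ coming from the rank bound $n$; there is no real obstacle, but the only place where one has to be a little careful is the intermediate step $e \cdot g \in {\downarrow}e$, which uses the fact that the semilattice is commutative and idempotent and not merely that the ambient operation is continuous.
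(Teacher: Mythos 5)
Your proof is correct, but it reaches the key neighbourhood $U(e)$ by a genuinely more elementary route than the paper. The paper observes that $\exp_{k-1}(\lambda)$ (where $k=|e|$) is a subsemilattice all of whose chains are finite, invokes Theorem~9 of Stepp to conclude that it is closed in any Hausdorff topological semigroup containing it, and takes $U(e)=\exp_n(\lambda)\setminus\exp_{k-1}(\lambda)$; you instead note that the principal down-set ${\downarrow}e$ has at most $2^{|e|}\leqslant 2^n$ elements and use only the $T_1$ separation guaranteed by the standing Hausdorff hypothesis to cut out ${\downarrow}e\setminus\{e\}$. For the openness of ${\uparrow}e$, the paper shows only that $e$ itself is interior to ${\uparrow}e$ (via $V(e)\cdot e\subseteq U(e)$) and then appeals to Theorem~VI-1.13$(iii)$ of Gierz et al.\ to upgrade this to openness, whereas you run the continuity argument at every point $f\in{\uparrow}e$ directly, correctly using that $e\cdot g\in{\downarrow}e$ automatically and that $U(e)\cap{\downarrow}e=\{e\}$. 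What your argument buys is self-containment: no external theorems, only continuity and $T_1$. What the paper's argument buys is generality: Stepp's theorem applies to any semilattice with finite chains, even when principal down-sets are infinite, so the same scheme survives in settings where your finiteness observation would fail. For the lemma as stated, both arguments are complete and correct.
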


\begin{proof}
We observe that the statement of the lemma is trivial in case when
$e$ is the zero of the semigroup $\exp_n(\lambda)$. We suppose
that $|e|=k$ for some $k=1,2,\ldots,n$. Since the set
$\exp_{k-1}(\lambda)$ is a subsemilattice of $\exp_n(\lambda)$,
Theorem~9 of \cite{Stepp1975} implies that the set
$U(e)=\exp_n(\lambda)\setminus\exp_{k-1}(\lambda)$ is an open
neighbourhood of $e$ in $\exp_n(\lambda)$. This implies that $e$
is a local minimum in $\exp_n(\lambda)$. The continuity of
semilattice operation in $\exp_n(\lambda)$ implies that there
exists an open neighbourhood $V(e)$ of $e$ such that $V(e)\cdot
e\subseteq U(e)$. This implies that $V(e)\subseteq{\uparrow}e$.
Then Theorem~VI-1.13$(iii)$ of
\cite{GierzHofmannKeimelLawsonMisloveScott2003} implies that
${\uparrow}e$ is an open subset of $(\exp_n(\lambda),\tau)$.
\end{proof}

We define the family $\mathscr{B}$ of non-empty subsets in
$\exp_n(\lambda)$ as follows:
\begin{equation}\label{base{mathcsr{B}}}
\begin{split}
    \mathscr{B}=\{U(e;e_1,\ldots,e_i)=
        & \;{\uparrow}e
    \setminus({\uparrow}e_1\cup\cdots\cup{\uparrow}e_i)\mid
    e,e_1,\ldots,e_i\in\exp_n(\lambda) \\
        & \mbox{~such that~} \;  e<e_1,\ldots,e<e_i, \; i\in\mathbb{N}\}.
\end{split}
\end{equation}

\begin{proposition}\label{proposition2.3}
The topology $\tau_c$ generated by the base $\mathscr{B}$ is the
unique compact Hausdorff topology on $\exp_n(\lambda)$ such that
$(\exp_n(\lambda),\tau_c)$ is a topological semilattice.
\end{proposition}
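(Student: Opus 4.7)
The plan is to verify in turn that $\mathscr{B}$ is a base for a topology $\tau_c$, that $\tau_c$ is Hausdorff, that $\cap$ is continuous on $(\exp_n(\lambda),\tau_c)$, and that $\tau_c$ is compact; uniqueness then follows from Lemma~\ref{lemma2.2}.

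For the base axiom, given $g\in U(e;e_1,\ldots,e_i)\cap U(f;f_1,\ldots,f_j)$, one has $e\cup f\subseteq g$, so $|e\cup f|\leqslant n$. Setting $D_k=(e\cup f)\cup e_k$ when $|(e\cup f)\cup e_k|\leqslant n$ (and discarding $e_k$ otherwise, since then $\uparrow e_k\cap\uparrow(e\cup f)=\varnothing$), and treating the $f_l$ analogously, the intersection equals the base element $U(e\cup f;D_\cdot)\in\mathscr{B}$. For Hausdorffness, separate distinct $A,B\in\exp_n(\lambda)$ as follows: if $A\subsetneq B$, the sets $U(A;B)$ and $\uparrow B$ suffice; if $A$ and $B$ are incomparable and $|A\cup B|\leqslant n$, take $U(A;A\cup B)$ and $U(B;A\cup B)$, which are disjoint because a common point would contain both $A$ and $B$, hence $A\cup B$, contradicting both exclusions; if $|A\cup B|>n$, the cones $\uparrow A$ and $\uparrow B$ are themselves disjoint.

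For continuity of $\cap$ at $(A,B)$ with $A\cap B=C$, fix a basic neighbourhood $U(C;C_1,\ldots,C_k)$ of $C$. Since $C_j\not\subseteq A\cap B$, for each $j$ at least one of $C_j\not\subseteq A$ or $C_j\not\subseteq B$ holds; partition $\{1,\ldots,k\}=I_A\sqcup I_B$ by selecting one such factor per $j$. Put $V(A)=U(A;\{A\cup C_j:j\in I_A,\,|A\cup C_j|\leqslant n\})$ and $V(B)=U(B;\{B\cup C_j:j\in I_B,\,|B\cup C_j|\leqslant n\})$. For $A'\in V(A)$ and $B'\in V(B)$, $A'\cap B'\supseteq C$; for each $j\in I_A$, either $|A\cup C_j|>n$ and $C_j\not\subseteq A'$ holds automatically (since $A'\supseteq A$ and $|A'|\leqslant n$), or $A'\not\supseteq A\cup C_j$, giving $C_j\not\subseteq A'$; the case $j\in I_B$ is symmetric. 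Hence $A'\cap B'\in U(C;C_1,\ldots,C_k)$.

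For compactness, let $X$ carry the discrete topology, fix $\infty\notin X$, and let $X_\infty=X\cup\{\infty\}$ be the corresponding one-point compactification, so that $X_\infty^n$ is compact Hausdorff. The surjection $\pi\colon X_\infty^n\to\exp_n(\lambda)$, $(x_1,\ldots,x_n)\mapsto\{x_1,\ldots,x_n\}\cap X$, is continuous with respect to $\tau_c$: for $e=\{y_1,\ldots,y_m\}$, $\pi^{-1}(\uparrow e)$ is the finite union, over injections $\sigma\colon\{1,\ldots,m\}\to\{1,\ldots,n\}$, of clopen cylinders imposing $x_{\sigma(r)}=y_r$ for each $r$, and so is clopen; consequently $\pi^{-1}(\exp_n(\lambda)\setminus\uparrow e)$ is also clopen, and these sets form a subbase for $\tau_c$. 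Thus $(\exp_n(\lambda),\tau_c)=\pi(X_\infty^n)$ is compact. For uniqueness, let $\tau'$ be any compact Hausdorff topology on $\exp_n(\lambda)$ making it a topological semilattice; by Lemma~\ref{lemma2.2} each $\uparrow e$ is $\tau'$-open, and it is also $\tau'$-closed as the preimage of the Hausdorff singleton $\{e\}$ under the continuous map $A\mapsto A\cap e$. Hence each $U(e;e_1,\ldots,e_i)\in\tau'$, so $\tau_c\subseteq\tau'$, and the identity $(\exp_n(\lambda),\tau')\to(\exp_n(\lambda),\tau_c)$ is a continuous bijection from a compact to a Hausdorff space, hence a homeomorphism; thus $\tau'=\tau_c$. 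The main obstacle is the bookkeeping in the continuity step, where each forbidden $C_j$ must be assigned to exactly one of the two factors.
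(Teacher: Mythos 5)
Your proof is correct, and while it follows the same skeleton as the paper's (Hausdorff, continuity of $\cap$, compactness, uniqueness via Lemma~\ref{lemma2.2}), two of your steps are genuinely different and, in fact, more robust. For continuity the paper splits into the cases $e=f$, $e<f$ and $e,f$ incomparable, and in the incomparable case asserts $U(e;h)\cdot U(f;h)=\{g\}$ with $h=e\cup f$; taken literally this is not true (e.g.\ for $e=\{1\}$, $f=\{2\}$, the elements $\{1,3\}$ and $\{2,3\}$ lie in those neighbourhoods but meet in $\{3\}\neq\varnothing$), and it also presumes $e\cup f\in\exp_n(\lambda)$. Your uniform bookkeeping --- assigning each forbidden $C_j$ to a factor via $C_j\not\subseteq A$ or $C_j\not\subseteq B$ and excluding $A\cup C_j$ (or noting $|A\cup C_j|>n$ makes the exclusion automatic) --- is the argument that actually closes this case, and your Hausdorff step likewise covers $|A\cup B|>n$, which the paper's choice $g=e\cup f$ silently skips. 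For compactness the paper offers only the remark that chains in $\exp_n(\lambda)$ have length $\leqslant n$; your realization of $(\exp_n(\lambda),\tau_c)$ as the continuous image of $X_\infty^n$ under $(x_1,\dots,x_n)\mapsto\{x_1,\dots,x_n\}\cap X$ is a complete proof and buys an independent confirmation that $\tau_c$ is the expected topology. Your uniqueness argument (clopenness of ${\uparrow}e$ in any competing topology, then compact-to-Hausdorff bijection) is exactly what the paper's one-line appeal to Lemma~\ref{lemma2.2} intends. The only cosmetic point worth noting is that the base $\mathscr{B}$ as displayed in (\ref{base{mathcsr{B}}}) requires $i\in\mathbb{N}$ excluded elements, so in a few places (your base-axiom step after discarding all $D_k$, and the neighbourhoods ${\uparrow}B$, ${\uparrow}A$ you use) you implicitly allow an empty exclusion list; the paper does the same, and it is harmless since ${\uparrow}e=U(e;e_1)\cup\cdots$ can be patched or the convention $i\geqslant 0$ adopted.
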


\begin{proof}
Let $e,f\in\exp_n(\lambda)$. If $e\leqslant f$ then $V(e)\cap
V(f)=\varnothing$ for open neighbourhoods $V(e)=U(e;f)$ and
$V(f)={\uparrow}f$ of $e$ and $f$, respectively. If the
idempotents $e$ and $f$ are incomparable, then we put $g=e\cup f$
and hence we have that $U(e;g)\cap U(f;g)=\varnothing$. Therefore
$\tau_c$ is a Hausdorff topology on $\exp_n(\lambda)$. We observe
that since any chain in $\exp_n(\lambda)$ has $\leqslant n$
elements, $\tau_c$ is a compact topology on $\exp_n(\lambda)$.

Next we show that the semilattice operation $\cap$ on
$(\exp_n(\lambda),\tau_c)$ is continuous. Let $e$ and $f$ be
arbitrary elements from $\exp_n(\lambda)$. If $e=f$ then
\begin{equation*}
 U(e;e_1,\ldots,e_k)\cdot U(e;e_1,\ldots,e_k)\subseteq
 U(e;e_1,\ldots,e_k)
\end{equation*}
for all $U(e;e_1,\ldots,e_k)\in\mathscr{B}$. If $e<f$ and
$U(e;e_1,\ldots,e_k)\in\mathscr{B}$ is an open neighbourhood of
$e$, then
\begin{equation*}
    U(e;e_1,\ldots,e_k,f)\cdot U(f;f_1,\ldots, f_m)\subseteq
    U(e;e_1,\ldots,e_k)
\end{equation*}
for all $f_1,\ldots,f_m\in{\uparrow}f\setminus\{ f\}$. If the
idempotents $e$ and $f$ are incomparable, then we have $g=e\cap
f<e, f$. Put $h=e\cup f$. Then for any open neighbourhood
$U(g;g_1,\ldots,g_k)\in\mathscr{B}$ of $g$ we have
\begin{equation*}
    U(e;h)\cdot U(f;h)=\{ g\}\subseteq U(g;g_1,\ldots,g_k).
\end{equation*}
Hence $(\exp_n(\lambda),\tau_c)$ is a topological semilattice.

The uniqueness of the topology $\tau_c$ follows from
Lemma~\ref{lemma2.2}.
\end{proof}

\begin{proposition}\label{proposition2.3a}
For a topological semilattice $\exp_n(\lambda)$ the following
conditions are equivalent:
\begin{enumerate}
    \item[$(i)$] $\exp_n(\lambda)$ is compact;
    \item[$(ii)$] $\exp_n(\lambda)$ is countably compact.%;
%    \item[$(iii)$] $\exp_n(\lambda)$ is pseudocompact;
%    \item[$(iv)$] $\exp_n(\lambda)$ is discretely pseudocompact.
\end{enumerate}
\end{proposition}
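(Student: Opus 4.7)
The plan is to handle $(i)\Rightarrow(ii)$ as trivial (every compact space is countably compact) and prove $(ii)\Rightarrow(i)$ by induction on the integer $n$, combining Lemma~\ref{lemma1.1} as the base case with Lemma~\ref{lemma2.2} as the main structural input for the inductive step.

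For the base case $n=1$, every non-zero element of $\exp_1(\lambda)$ is a singleton, and any two distinct singletons meet in the zero $\varnothing$; hence every non-zero idempotent of $\exp_1(\lambda)$ is primitive. Lemma~\ref{lemma1.1} then gives the equivalence immediately.

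For the inductive step, assume the claim for $\exp_{n-1}(\lambda)$ and let $(\exp_n(\lambda),\tau)$ be countably compact. First I would invoke Lemma~\ref{lemma2.2} to observe that for each top-level element $A$ with $|A|=n$ one has ${\uparrow}A=\{A\}$, and this singleton is open; consequently $X_n=\{A\in\exp_n(\lambda):|A|=n\}$ is a union of open singletons, hence open, and its complement $\exp_{n-1}(\lambda)$ is closed. As a closed subspace of a countably compact space, $\exp_{n-1}(\lambda)$ is itself countably compact, and the inductive hypothesis applied to its induced semigroup topology then shows that it is compact.

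With this in hand, a finite-subcover argument finishes the proof. Given any open cover $\mathscr{U}$ of $\exp_n(\lambda)$, compactness of $\exp_{n-1}(\lambda)$ yields a finite subfamily $U_1,\ldots,U_m\in\mathscr{U}$ covering $\exp_{n-1}(\lambda)$. The remainder $F=\exp_n(\lambda)\setminus(U_1\cup\cdots\cup U_m)$ is closed in the countably compact space $\exp_n(\lambda)$ and disjoint from $\exp_{n-1}(\lambda)$, so $F\subseteq X_n$ and $F$ is discrete in the subspace topology. A closed subspace of a countably compact space is countably compact, but an infinite discrete space is not (witness the cover by singletons), so $F$ must be finite; covering its finitely many points by finitely many further members of $\mathscr{U}$ produces a finite subcover. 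There is no real obstacle once Lemma~\ref{lemma2.2} is available: the single structural fact that ${\uparrow}e$ is open for every idempotent simultaneously makes $\exp_{n-1}(\lambda)$ closed and makes the top level $X_n$ discrete, and these two facts together carry the induction.
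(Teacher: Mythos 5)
Your proposal is correct and follows essentially the same route as the paper: induction on $n$ with Lemma~\ref{lemma1.1} as the base case, the inductive hypothesis giving compactness of $\exp_{n-1}(\lambda)$, and Lemma~\ref{lemma2.2} making the top-level idempotents isolated so that the part of the cover's complement is a closed discrete (hence finite) subspace. The only cosmetic difference is that you argue directly for a finite subcover while the paper phrases the same step as a proof by contradiction, and you spell out slightly more explicitly why $\exp_{n-1}(\lambda)$ is closed and countably compact before invoking the induction hypothesis.
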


\begin{proof}
Since the statement of the proposition is trivial when the
semilattice $\exp_n(\lambda)$ is finite, we suppose that the
cardinal $\lambda$ is infinite.

We observe that the implication $(i)\Rightarrow(ii)$ is trivial.

We shall prove the implication $(ii)\Rightarrow(i)$ by induction.
By Lemma~\ref{lemma1.1} every countably compact semigroup topology
on $\exp_1(\lambda)$ is compact. We suppose that the assertion of
the proposition holds for $k=1,2,\ldots,n-1$ and we shall prove
that it is true for $k=n$. Suppose the contrary: there exists a
semigroup topology $\tau$ on $\exp_n(\lambda)$ such that
$(\exp_n(\lambda),\tau)$ is a countably compact non-compact
topological semilattice. Let
$\mathscr{C}=\{U_\alpha\}_{\alpha\in\mathscr{D}}$ be an open cover
of the topological semilattice $(\exp_n(\lambda),\tau)$ which does
not contain a finite subcover. Since by assumption of induction
the subsemilattice $\exp_{n-1}(\lambda)$ is compact, there exists
a finite subfamily
$\mathscr{C}_0=\{U_{\alpha_1},\ldots,U_{\alpha_n}\}_{\alpha_1,
\ldots,\alpha_n\in\mathscr{D}}$ of $\mathscr{C}$ such that
$\mathscr{C}_0$ is an open cover of $\exp_{n-1}(\lambda)$. Since
the topological space $(\exp_n(\lambda),\tau)$ is non-compact and
by Lemma~\ref{lemma2.2} any idempotent
$\varepsilon\in\exp_n(\lambda)\setminus\exp_{n-1}(\lambda)$ is an
isolated point in $(\exp_n(\lambda),\tau)$, we have that
$A=\exp_n(\lambda)\setminus(U_{\alpha_1}\cup\cdots\cup
U_{\alpha_n})$ is a closed discrete infinite subspace of
$(\exp_n(\lambda),\tau)$. But Theorem~3.10.4 of
\cite{Engelking1989} implies that $A$ is a countably compact
space, a contradiction. The obtained contradiction implies the
statement of the proposition.
\end{proof}

The following example shows that there exists a semigroup topology
$\tau^c$ on the semigroup $\mathscr{I}_\lambda^{n}$ such that
$E(\mathscr{I}_\lambda^{n})$ is a compact semilattice.

\begin{example}\label{example2.4}
We identify the semilattice $E(\mathscr{I}_\lambda^{n})$ with the
semilattice $\exp_n(\lambda)$. Let $\tau_c$ be the topology on
$E(\mathscr{I}_\lambda^{n})$ determined by the base $\mathscr{B}$
(see: (\ref{base{mathcsr{B}}})). We define the topology $\tau^c$
on $\mathscr{I}_\lambda^{n}$ as follows:
\begin{itemize}
    \item[$(i)$] at any idempotent $e\in\mathscr{I}_\lambda^{n}$
                 the base of the topology $\tau^c$ coincides with
                 the base of $\tau_c$;
    \item[$(ii)$] all non-idempotent elements of the semigroup
                 $\mathscr{I}_\lambda^{n}$ are isolated points.
\end{itemize}
\end{example}

\begin{proposition}\label{proposition2.5}
$(\mathscr{I}_\lambda^{n},\tau^c)$ is a topological inverse
semigroup.
\end{proposition}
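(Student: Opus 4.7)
My plan is to verify continuity of both inversion and multiplication for $\tau^c$. For inversion: at any non-idempotent $\alpha$, both $\alpha$ and $\alpha^{-1}$ are isolated points of $\tau^c$, so continuity is automatic; at an idempotent $e$ we have $e^{-1}=e$, and every basic $\tau_c$-neighbourhood $U(e;e_1,\ldots,e_i)\in\mathscr{B}$ lies inside $E(\mathscr{I}_\lambda^n)$, so by self-inversion of idempotents such a neighbourhood is invariant under inversion.

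For multiplication at $(\alpha,\beta)$, I split into four cases according to the idempotency of $\alpha$ and $\beta$. If both are non-idempotent, then $\{\alpha\}$ and $\{\beta\}$ are open and the product is a single point, so continuity is immediate. If both are idempotent, the product lies in $E$, and since the local base at each idempotent in $\tau^c$ coincides with the local base of $\tau_c$, continuity reduces to continuity of the semilattice operation on $(\exp_n(\lambda),\tau_c)$, which is Proposition~\ref{proposition2.3}. The two mixed cases are exchanged by inversion (via $(\alpha\beta)^{-1}=\beta^{-1}\alpha^{-1}$ together with continuity of inversion just proved), so it suffices to treat $\alpha=e\in E(\mathscr{I}_\lambda^n)$ with $\beta\notin E(\mathscr{I}_\lambda^n)$. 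Here $\gamma=e\beta=\beta|_{\operatorname{dom}(e)\cap\operatorname{dom}(\beta)}$, and since $\{\beta\}$ is open, the task reduces to producing a $\tau_c$-open neighbourhood $V(e)$ of $e$ with $\operatorname{dom}(e')\cap\operatorname{dom}(\beta)=\operatorname{dom}(e)\cap\operatorname{dom}(\beta)$ for all $e'\in V(e)$; this forces $e'\beta=\gamma$, hence $V(e)\cdot\{\beta\}=\{\gamma\}$, which is contained in every neighbourhood of $\gamma$.

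Using the isomorphism $h$ of Proposition~\ref{proposition2.1} to identify idempotents with their domains, let $g_e$ be the idempotent with domain $\operatorname{dom}(e)\cap\operatorname{dom}(\beta)$, and enumerate $\operatorname{dom}(\beta)\setminus\operatorname{dom}(e)=\{y_1,\ldots,y_m\}$. If $m\geq 1$, put $V(e)=U(g_e;g_e\cup\{y_1\},\ldots,g_e\cup\{y_m\})\in\mathscr{B}$ (each $g_e\cup\{y_j\}\subseteq\operatorname{dom}(\beta)$ has cardinality at most $n$, and $g_e<g_e\cup\{y_j\}$ since $y_j\notin\operatorname{dom}(g_e)$); if $m=0$, put $V(e)={\uparrow}g_e$, which is open by Lemma~\ref{lemma2.2}. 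A direct check gives $e\in V(e)$, and for $e'\in V(e)$ the conditions $e'\geq g_e$ and (when $m\geq 1$) $e'\not\geq g_e\cup\{y_j\}$ jointly force $\operatorname{dom}(g_e)\subseteq\operatorname{dom}(e')$ and $y_j\notin\operatorname{dom}(e')$, whence $\operatorname{dom}(e')\cap\operatorname{dom}(\beta)=\operatorname{dom}(g_e)$. The main subtlety is that this construction must work uniformly for all $e$, including the case $|e|=n$ where no basic open set of the form $U(e;\ldots)$ exists in $\mathscr{B}$; the remedy is to root the basic open set at the smaller idempotent $g_e\leq e$, whose domain is entirely controlled by $\beta$.
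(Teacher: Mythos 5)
Your proof is correct and follows essentially the same strategy as the paper's: non-idempotents are isolated, the purely idempotent case reduces to Proposition~\ref{proposition2.3}, and the mixed case is settled by exhibiting an explicit basic neighbourhood of the idempotent on which the product with the fixed non-idempotent $\beta$ is constant. The one refinement worth noting is that you root that neighbourhood at the smaller idempotent $g_e$ with domain $\operatorname{dom}(e)\cap\operatorname{dom}(\beta)$, so the excluded idempotents $g_e\cup\{y_j\}$ stay inside $\exp_n(\lambda)$ and rank-$n$ idempotents are handled uniformly, whereas the paper builds the neighbourhood at $\varepsilon$ itself and must first set aside the rank-$n$ idempotents by citing their isolation from \cite{GutikLawsonRepovs2009}; your use of continuity of inversion to dispose of the second mixed case in place of the paper's symmetric direct argument is also valid.
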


\begin{proof}
Since all non-idempotent elements of the semigroup
$\mathscr{I}_\lambda^{n}$ are isolated points in
$(\mathscr{I}_\lambda^{n},\tau^c)$,
Proposition~\ref{proposition2.3} implies that it is completely to
show that the semigroup operation in
$(\mathscr{I}_\lambda^{n},\tau^c)$ is continuous in the following
two cases:
\begin{equation*}
    \mbox{a)} \quad \alpha\cdot\varepsilon; \qquad \mbox{and}
    \qquad \mbox{b)} \quad \varepsilon\cdot\alpha, \qquad
    \mbox{for} \quad \alpha\in\mathscr{I}_\lambda^{n}\setminus
    E(\mathscr{I}_\lambda^{n}) \quad \mbox{and} \quad
    \varepsilon\in E(\mathscr{I}_\lambda^{n}).
\end{equation*}

By Corollary~8~\cite{GutikLawsonRepovs2009} an idempotent
$\varepsilon$ of the semigroup $\mathscr{I}_\lambda^{n}$ is an
isolated point in $\mathscr{I}_\lambda^{n}$ in the case
$\operatorname{rank}\alpha=n$. Therefore we can assume that
$\operatorname{rank}\varepsilon<n$.

In case a) let $\{x_1,\ldots,x_s\}=\{
x\in\operatorname{ran}\alpha\mid
x\notin\operatorname{dom}\varepsilon\}$. Put
$\varepsilon_1=\varepsilon\cup\{
x_1\},\ldots,\varepsilon_s=\varepsilon\cup\{ x_s\}$. Then
\begin{equation*}
    \{\alpha\}\cdot
    U(\varepsilon;\varepsilon_1,\ldots,\varepsilon_s)=\{\alpha\cdot\varepsilon\}.
\end{equation*}
In case b) let $\{y_1,\ldots,y_p\}=\{
x\in\operatorname{dom}\alpha\mid
x\notin\operatorname{ran}\varepsilon\}$. Put
$\varepsilon_1=\varepsilon\cup\{
y_1\},\ldots,\varepsilon_p=\varepsilon\cup\{ y_p\}$. Then
\begin{equation*}
    U(\varepsilon;\varepsilon_1,\ldots,\varepsilon_p)\cdot\{\alpha\}
    =\{\varepsilon\cdot\alpha\}.
\end{equation*}
Therefore the semigroup operation is continuous on
$(\mathscr{I}_\lambda^{n},\tau^c)$.

The continuity of the inversion in
$(\mathscr{I}_\lambda^{n},\tau^c)$ follows from the facts that the
band $E(\mathscr{I}_\lambda^{n})$ is a compact open subsemigroup
of $(\mathscr{I}_\lambda^{n},\tau^c)$ and all non-idempotent
elements of the semigroup $\mathscr{I}_\lambda^{n}$ are isolated
points in $(\mathscr{I}_\lambda^{n},\tau^c)$.
\end{proof}

\begin{theorem}\label{theorem2.6}
Let $(\mathscr{I}_\lambda^{n},\tau)$ be a topological semigroup.
If the band $E(\mathscr{I}_\lambda^{n})$ is a compact subset of
$(\mathscr{I}_\lambda^{n},\tau)$, then
$(\mathscr{I}_\lambda^{n},\tau)$ is an absolutely $H$-closed
topological semigroup.
\end{theorem}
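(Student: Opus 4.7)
The plan is to proceed by induction on $n$, with Theorem~\ref{theorem1.4} furnishing the base case $n=1$. For the inductive step I would first establish ordinary $H$-closedness of $(\mathscr{I}_\lambda^n,\tau)$, and then upgrade it to absolute $H$-closedness by classifying continuous homomorphic images.

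Suppose $(\mathscr{I}_\lambda^n,\tau)$ is densely embedded in a topological semigroup $T$. Since $E(\mathscr{I}_\lambda^n)\cong\exp_n(\lambda)$ is compact, Proposition~\ref{proposition2.3} identifies its subspace topology with the unique compact semilattice topology $\tau_c$, so each $\exp_k(\lambda)$ is closed in the band; in particular $(\mathscr{I}_\lambda^{n-1},\tau|_{\mathscr{I}_\lambda^{n-1}})$ also has compact band. The closedness theorem for bands in topological semigroups gives $E(T)=E(\mathscr{I}_\lambda^n)$, and the induction hypothesis shows $\mathscr{I}_\lambda^{n-1}$ is $H$-closed, hence closed in $T$. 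For any candidate $y\in T\setminus\mathscr{I}_\lambda^n$ and any approximating net $\alpha_i\to y$ in $\mathscr{I}_\lambda^n$, continuity forces $y\varepsilon,\,\varepsilon y\in\overline{\mathscr{I}_\lambda^{n-1}}=\mathscr{I}_\lambda^{n-1}$ for every idempotent $\varepsilon$ of rank strictly less than $n$. Combined with Corollary~8 of \cite{GutikLawsonRepovs2009}---that rank-$n$ elements are isolated in $\mathscr{I}_\lambda^n$, and hence, by density together with Hausdorffness of $T$, also in $T$---these constraints allow one to pin $y$ inside the closed set $\mathscr{I}_\lambda^{n-1}$, which is the desired contradiction.

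To upgrade $H$-closedness to absolute $H$-closedness, I exploit that every congruence on $\mathscr{I}_\lambda^n$ is the Rees congruence of some ideal in the chain $\{0\}=\mathscr{I}_\lambda^0\subsetneq\mathscr{I}_\lambda^1\subsetneq\cdots\subsetneq\mathscr{I}_\lambda^n$. Consequently, any continuous homomorphic image of $(\mathscr{I}_\lambda^n,\tau)$ is either a singleton (trivially $H$-closed) or a topological Rees quotient $\mathscr{I}_\lambda^n/\mathscr{I}_\lambda^k$ whose band is compact (being the continuous image of $E(\mathscr{I}_\lambda^n)$) and which inherits the structural features---compact band, closed lower ideals, and isolated top-rank classes---needed to run the $H$-closedness argument verbatim on it.

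The main obstacle is the rank-$n$ case of the approximating net: individual rank-$n$ elements of $T$ are isolated, but the set of all such elements need not be closed in $T$, so one cannot \emph{a priori} exclude $y$ as a limit of them. This is handled by the idempotent-multiplication trick above, combined with the closedness of $\mathscr{I}_\lambda^{n-1}$, to trap $y$ algebraically inside $\mathscr{I}_\lambda^{n-1}$. A secondary technical point is confirming the Rees description of the congruence lattice of $\mathscr{I}_\lambda^n$ and verifying that each Rees quotient, with its inherited quotient topology, fits the inductive scheme.
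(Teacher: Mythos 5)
Your overall architecture (induction on $n$ with Theorem~\ref{theorem1.4} as base case) matches the paper, but both halves of your inductive step have genuine gaps. First, in the $H$-closedness argument the decisive step is missing: from $y\varepsilon,\varepsilon y\in\mathscr{I}_\lambda^{n-1}$ for every idempotent $\varepsilon$ of rank $<n$ you cannot ``pin $y$ inside $\mathscr{I}_\lambda^{n-1}$'' --- these products only constrain the translates of $y$, not $y$ itself, and no mechanism is offered that forces $y$ into a closed subset it was assumed to avoid. The paper's actual mechanism is different in kind: it separates $x$ from $0_T$ by neighbourhoods $V(x)$, $V(0_T)$ with $V(0_T)\cdot V(x)\subseteq U(0_T)$ and $U(0_T)\cap V(x)=\varnothing$, and then uses compactness of the band \emph{together with the induction hypothesis applied to the subsemigroups} $\mathscr{I}_\lambda^{n}(\varepsilon_0)\cong\mathscr{I}_\lambda^{n-1}$ (whose images are therefore already closed and can be excluded from a neighbourhood of $x$) to manufacture an element $\alpha$ mapped into the neighbourhood of $x$ that satisfies $\alpha\in\widetilde{V}(0)\cdot\alpha$ for a prescribed neighbourhood $\widetilde{V}(0)$ of zero; this forces $V(0_T)\cdot V(x)$ to meet $V(x)$, the desired contradiction. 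That ``left identity near zero'' trick is the heart of the proof and is absent from your outline.

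Second, your reduction of absolute $H$-closedness to Rees quotients rests on a false premise: for $n\geqslant 2$ the semigroup $\mathscr{I}_\lambda^{n}$ is \emph{not} congruence-Rees. For example, the relation that collapses $\mathscr{I}_\lambda^{1}$ to the zero, identifies two rank-$2$ elements exactly when they have the same domain and the same range, and is the identity on elements of rank $\geqslant 3$, is a congruence (the group $\mathcal{S}_2$ at the rank-$2$ level is factored out; products that drop in rank land in the collapsed ideal) but is not a Rees congruence. Consequently the homomorphic images of $\mathscr{I}_\lambda^{n}$ are not all of the form $\mathscr{I}_\lambda^{n}/\mathscr{I}_\lambda^{k}$, and your plan to ``run the $H$-closedness argument verbatim'' on each image does not get off the ground. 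Only $\mathscr{I}_\lambda^{1}=B_\lambda$ is congruence-free, which is exactly why the paper can pass from Theorem~\ref{theorem1.3} to Theorem~\ref{theorem1.4} that way but must, for $n\geqslant 2$, carry out the induction directly at the level of an arbitrary continuous homomorphism $h\colon\mathscr{I}_\lambda^{n}\rightarrow T$ with dense non-closed image, never classifying the congruences at all.
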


\begin{proof}
We shall prove the assertion of the theorem by induction. By
Theorem~\ref{theorem1.4} the topological semigroup
$\mathscr{I}_\lambda^{1}$ with compact band is absolutely
$H$-closed. We suppose that the assertion of the theorem holds for
$n=1,2,\ldots,k-1$ and we shall prove that it is true for $n=k$.

Suppose the contrary: the topological semigroup
$(\mathscr{I}_\lambda^{n},\tau)$ is not absolutely $H$-closed.
Then there exist a Hausdorff topological semigroup $T$ and
continuous homomorphism $h\colon
\mathscr{I}_\lambda^{n}\rightarrow T$ such that
$(\mathscr{I}_\lambda^{n})h$ is not a closed subsemigroup of $T$.
Since the closure $\operatorname{cl}_G(L)$ of a subsemigroup $L$
of topological semigroup $G$ is a subsemigroup in $G$ (see
\cite[Vol.~1, p.~9]{CHK}), without loss of generality we cam
assume that $(\mathscr{I}_\lambda^{n})h$ is a dense subsemigroup
of $T$ and $T\setminus(\mathscr{I}_\lambda^{n})h\neq\varnothing$.
Then by Lemma~8~\cite{GutikPavlyk2005}, zero of the semigroup
$(\mathscr{I}_\lambda^{n})h$ is zero in $T$ and we denote it by
$0_T$. Let $x\in T\setminus(\mathscr{I}_\lambda^{n})h$. Then
$0_T\cdot x=0_T$. The continuity of the semigroup operation in $T$
implies that for any open neighbourhood $W(0_T)$ of zero $0_T$ in
$T$ there exist open neighbourhoods $U(0_T)$ and $V(0_T)$ of $0_T$
in $T$ and an open neighbourhood $V(x)$ of $x$ in $T$  such that
\begin{equation*}
    V(0_T)\cdot V(x)\subseteq U(0_T)\subseteq W(0_T), \qquad
    V(0_T)\subseteq U(0_T) \qquad \mbox{and} \qquad U(0_T)\cap
    V(x)=\varnothing.
\end{equation*}
By the assumption of the induction and Theorem~9 of
\cite{Stepp1975} we have that
$V(x)\cap\big((\mathscr{I}_\lambda^{n-1})h\cup
(E(\mathscr{I}_\lambda^{n}))h\big)=\varnothing$. We remark that
for any idempotent $\varepsilon_0$ of the semigroup
$\mathscr{I}_\lambda^{n}$ with $\operatorname{ran}\varepsilon_0=1$
the set $\mathscr{I}_\lambda^{n}(\varepsilon_0)=
\{\chi\in\mathscr{I}_\lambda^{n}\mid
\chi\varepsilon_0=\varepsilon_0\chi=\varepsilon_0\}$ is a
subsemigroup of $\mathscr{I}_\lambda^{n}$ and simple observations
show that $\mathscr{I}_\lambda^{n}(\varepsilon_0)$ is
algebraically isomorphic to the semigroup
$\mathscr{I}_\lambda^{n-1}$. Then the compactness of the band
$E(\mathscr{I}_\lambda^{n})$ implies that there exist a finite
subset of idempotents $\{\varepsilon_1,\ldots,\varepsilon_k\}$ in
$\mathscr{I}_\lambda^{n}$ with
$\operatorname{ran}\varepsilon_1=\ldots
=\operatorname{ran}\varepsilon_k=1$, an open neighbourhood
$\widetilde{V}(0)$ of the zero $0$ of the semigroup
$\mathscr{I}_\lambda^{n}$ and an open neighbourhood $O(x)$ of $x$
in $T$ such that
\begin{equation*}
\begin{split}
    (\widetilde{V}(0))h\subseteq & \;V(0_T),  \quad
    \widetilde{V}(0)\cap\big(\mathscr{I}_\lambda^{n}(\varepsilon_1)
    \cup\ldots\mathscr{I}_\lambda^{n}(\varepsilon_k)\big)=
    \varnothing, \quad O(x)\subseteq V(x) \quad
    \mbox{and} \quad \\
    &\big(O(x)\cap(\mathscr{I}_\lambda^{n})h\big)h^{-1}\cap
    \big(\mathscr{I}_\lambda^{n}(\varepsilon_1)
    \cup\ldots\mathscr{I}_\lambda^{n}(\varepsilon_k)\big)=
    \varnothing.
\end{split}
\end{equation*}
The compactness of the band $E(\mathscr{I}_\lambda^{n})$ and the
infiniteness of the set
$(O(x)\cap\big(\mathscr{I}_\lambda^{n})h\big)h^{-1}$ imply that
there exists
$\alpha\in\big(O(x)\cap(\mathscr{I}_\lambda^{n})h\big)h^{-1}$ such
that $\alpha\in \widetilde{V}(0)\cdot\alpha$. Then
\begin{equation*}
    \big(\widetilde{V}(0)\big)h\cdot(\alpha)h\subseteq V(0_T)\cdot
    O(x)\subseteq U(0_T) \quad \mbox{and} \quad
    \big((\widetilde{V}(0))h\cdot(\alpha)h\big)\cap
    O(x)\neq\varnothing.
\end{equation*}
This contradicts to the assumption $U(0_T)\cap V(x)=\varnothing$.
The obtained contradiction implies that
$T\setminus(\mathscr{I}_\lambda^{n})h=\varnothing$, and hence
$\mathscr{I}_\lambda^{n}$ is an absolutely $H$-closed topological
semigroup.
\end{proof}

Proposition~\ref{proposition2.3a} and Theorem~\ref{theorem2.6}
imply the following:

\begin{corollary}\label{corollary2.6a}
Let $(\mathscr{I}_\lambda^{n},\tau)$ be a topological semigroup.
If the band $E(\mathscr{I}_\lambda^{n})$ is a countably compact
subset of $(\mathscr{I}_\lambda^{n},\tau)$, then
$(\mathscr{I}_\lambda^{n},\tau)$ is an absolutely $H$-closed
topological semigroup.
\end{corollary}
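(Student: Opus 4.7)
The plan is a direct reduction: establish that the hypothesis ``countably compact band'' is equivalent, in our setting, to ``compact band,'' and then apply Theorem~\ref{theorem2.6} verbatim. The reduction proceeds through the semilattice $\exp_n(\lambda)$.

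First, I would observe that $E(\mathscr{I}_\lambda^{n})$, endowed with the subspace topology inherited from $(\mathscr{I}_\lambda^{n},\tau)$, is a topological semilattice: it is a subsemigroup of a topological semigroup, and the semilattice operation is the restriction of the continuous multiplication. Next, by Proposition~\ref{proposition2.1} the band $E(\mathscr{I}_\lambda^{n})$ is algebraically isomorphic to $\exp_n(\lambda)$, so this subspace topology transports through the isomorphism to a semigroup (semilattice) topology $\tau_E$ on $\exp_n(\lambda)$ for which $(\exp_n(\lambda),\tau_E)$ is a topological semilattice homeomorphic to $E(\mathscr{I}_\lambda^{n})$.

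Then I would invoke the hypothesis: since $E(\mathscr{I}_\lambda^{n})$ is countably compact in $(\mathscr{I}_\lambda^{n},\tau)$, the homeomorphic copy $(\exp_n(\lambda),\tau_E)$ is a countably compact topological semilattice. Proposition~\ref{proposition2.3a} immediately upgrades this to compactness of $(\exp_n(\lambda),\tau_E)$, and transporting back we conclude that $E(\mathscr{I}_\lambda^{n})$ is a compact subspace of $(\mathscr{I}_\lambda^{n},\tau)$.

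With compactness of the band in hand, Theorem~\ref{theorem2.6} applies directly and yields that $(\mathscr{I}_\lambda^{n},\tau)$ is absolutely $H$-closed, finishing the proof. There is no genuine obstacle here beyond the bookkeeping check that the subspace topology on $E(\mathscr{I}_\lambda^{n})$ is indeed a semigroup topology in the sense required by Proposition~\ref{proposition2.3a}; this is immediate from the continuity of multiplication in $(\mathscr{I}_\lambda^{n},\tau)$ restricted to the band. No additional hypothesis on $\tau$ (e.g.\ continuity of inversion) is needed for this reduction.
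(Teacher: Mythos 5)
Your proposal is correct and is exactly the paper's derivation: the corollary is stated there as an immediate consequence of Proposition~\ref{proposition2.3a} (which upgrades countable compactness of the semilattice $\exp_n(\lambda)\cong E(\mathscr{I}_\lambda^{n})$ to compactness) combined with Theorem~\ref{theorem2.6}. Your added bookkeeping about the subspace topology being a semilattice topology is the right (and only) detail to check.
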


%%%%%%%%%%%%%%%%%%%%%%%%%%%%%%%%%%%%%%%%%%%%%%%%%%%%%%%%%%%%%%%%%%%%%

\section{Countably compact topological Brandt
$\lambda^0$-extensions}

\begin{definition}[\cite{GutikPavlyk2006}]\label{definition4-1}
Let $\mathscr{S}$ be some class of topological monoids with zero.
Let $\lambda$ be any cardinal $\geqslant 1$, and
$(S,\tau)\in\mathscr{S}$. Let $\tau_{B}$ be a topology on
$B^0_{\lambda}(S)$ such that
\begin{itemize}
  \item[a)] $\left(B^0_{\lambda}(S), \tau_{B}\right)\in\mathscr{S};$
  \item[b)] $\tau_{B}|_{S_{\alpha,\alpha}}=\tau$ for some
$\alpha\in\lambda$.
\end{itemize}
Then $\left(B^0_{\lambda}(S), \tau_{B}\right)$ is called a
\emph{topological Brandt $\lambda^0$-extension of $(S, \tau)$ in}
$\mathscr{S}$. If $\mathscr{S}$ coincides with the class of all
topological semigroups, then $\left(B^0_{\lambda}(S),
\tau_{B}\right)$ is called a \emph{topological Brandt
$\lambda^0$-extension of} $(S, \tau)$.
\end{definition}

A topological Brandt $\lambda^0$-extension
$\left(B^0_{\lambda}(S), \tau_{B}\right)$ is called \emph{compact}
(resp., \emph{countably compact}) if the topological space
$\left(B^0_{\lambda}(S), \tau_{B}\right)$ is compact (resp.,
countably compact)~\cite{GutikRepovs}. Gutik and Repov\v{s} in
\cite{GutikRepovs} described the structures of compact topological
Brandt $\lambda^0$-extensions.

We need the following lemma from~\cite{GutikRepovs}:

\begin{lemma}[\cite{GutikRepovs}]\label{lemma4-1a}
For any topological monoid $(S,\tau)$ with zero and for any finite
cardinal $\lambda\geqslant 1$ there exists an unique topological
Brandt $\lambda^0$-extension $\left(B^0_{\lambda}(S),
\tau_{B}\right)$ and the topology $\tau_{B}$ generated by the base
$\mathscr{B}_B=\bigcup\{\mathscr{B}_B(t)\mid t\in
B^0_{\lambda}(S)\}$, where:
\begin{itemize}
    \item[$(i)$] $\mathscr{B}_B(t)=\{(U(s))_{\alpha,\beta}
    \setminus\{ 0_S\}\mid U(s)\in\mathscr{B}_S(s)\}$, when
    $t=(\alpha,s,\beta)$ is a non-zero element of
    $B^0_{\lambda}(S)$, $\alpha,\beta\in\lambda$;
    \item[$(ii)$] $\mathscr{B}_B(0)=\{\bigcup_{\alpha,\beta\in
    \lambda}(U(0_S))_{\alpha,\beta}\mid U(0_S)\in\mathscr{B}_S(0_S)\}$,
    when $0$ is the zero of $B^0_{\lambda}(S)$,
\end{itemize}
and $\mathscr{B}_S(s)$ is a base of the topology $\tau$ at the
point $s\in S$.
\end{lemma}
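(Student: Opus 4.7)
My plan is to establish both uniqueness and existence of $\tau_B$, using throughout two features of the hypothesis: that $\lambda$ is finite, and that $S$ is a monoid (so that the unit $1_S$ is available to translate between blocks $S_{\alpha,\beta}$). For uniqueness, suppose $\tau_B$ is any topology making $B^0_\lambda(S)$ a topological semigroup with $\tau_B|_{S_{\alpha_0,\alpha_0}}=\tau$ for some fixed $\alpha_0\in\lambda$. First I would show $\tau_B$ is determined on every block: left multiplication by $(\alpha,1_S,\alpha_0)$ followed by right multiplication by $(\alpha_0,1_S,\beta)$ is a continuous self-map of $B^0_\lambda(S)$ whose restriction is a bijection $\phi_{\alpha,\beta}\colon S_{\alpha_0,\alpha_0}\to S_{\alpha,\beta}$ sending $(\alpha_0,s,\alpha_0)\mapsto(\alpha,s,\beta)$, and the analogous construction with indices reversed provides a continuous inverse, so $\phi_{\alpha,\beta}$ is a homeomorphism. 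Hence $\tau_B|_{S_{\alpha,\beta}}$ is forced to be the image of $\tau$ under the natural bijection, which fixes the basic neighbourhoods of every non-zero point as required by (i).

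To determine the base at the zero $0$, let $W$ be any $\tau_B$-open neighbourhood of $0$; for each pair $(\alpha,\beta)$ the set $W\cap S_{\alpha,\beta}$ is open in $S_{\alpha,\beta}$ and contains $0$, so by the previous step equals $(U_{\alpha,\beta}(0_S))_{\alpha,\beta}$ for some $\tau$-open neighbourhood $U_{\alpha,\beta}(0_S)\ni 0_S$. Here is where the finiteness of $\lambda$ enters: $U(0_S)=\bigcap_{\alpha,\beta\in\lambda}U_{\alpha,\beta}(0_S)$ is a finite intersection of $\tau$-open sets, hence still open in $(S,\tau)$, and $\bigcup_{\alpha,\beta}(U(0_S))_{\alpha,\beta}\subseteq W$, so every $\tau_B$-neighbourhood of $0$ contains a set of the form required by (ii). Together, these two steps show that any such $\tau_B$ coincides with the topology generated by $\mathscr{B}_B$.

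For existence it remains to check that the topology generated by $\mathscr{B}_B$ does satisfy Definition~\ref{definition4-1}. Condition (b) is clear from (i). Continuity of multiplication is verified case by case: products of non-zero elements with matching middle indices and non-zero product in $S$ reduce to continuity of multiplication in $(S,\tau)$; products involving the zero of $B^0_\lambda(S)$ reduce to $0_S$-neighbourhoods of $(S,\tau)$; and products of non-zero factors evaluating to $0$, either by index-mismatch or because $ab=0_S$ in $S$, are handled by shrinking to sufficiently small basic neighbourhoods. The main obstacle is the last case: given a basic $\bigcup_{\alpha,\beta}(U(0_S))_{\alpha,\beta}$ of $0$, one must find a single $\tau$-open $V(0_S)\ni 0_S$ with $V(0_S)\cdot V(0_S)\subseteq U(0_S)$ in $S$ that serves uniformly across all output index pairs $(\alpha,\beta)$, which is again possible precisely because $\lambda$ is finite.
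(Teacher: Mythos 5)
The paper gives no proof of this lemma at all --- it is imported verbatim from \cite{GutikRepovs} --- so your argument has to stand on its own; in outline it is the natural one (translations by the units $(\alpha,1_S,\beta)$ plus finiteness of $\lambda$), and it is essentially the route the cited source takes. There is, however, one genuine gap in the uniqueness half. Your two steps establish (a) that the \emph{subspace} topology $\tau_B|_{S_{\alpha,\beta}}$ on each block is the transport of $\tau$ along $\phi_{\alpha,\beta}$, and (b) that every $\tau_B$-neighbourhood of $0$ contains a set of the form $\bigcup_{\alpha,\beta}(U(0_S))_{\alpha,\beta}$. To conclude that $\tau_B$ \emph{equals} the topology generated by $\mathscr{B}_B$ you also need the converse: every member of $\mathscr{B}_B$ must be shown to be $\tau_B$-open. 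Knowing $\tau_B|_{S_{\alpha,\beta}}$ does not by itself make $(U(s))_{\alpha,\beta}\setminus\{0\}$ a $\tau_B$-neighbourhood of $(\alpha,s,\beta)$ in the whole space $B^0_{\lambda}(S)$: a priori every $\tau_B$-neighbourhood of that point could meet other blocks. The missing observation is that each $S_{\gamma,\delta}$ is closed in $(B^0_{\lambda}(S),\tau_B)$, being the fixed-point set of the continuous map $x\mapsto(\gamma,1_S,\gamma)\cdot x\cdot(\delta,1_S,\delta)$ in a Hausdorff space; since $\lambda$ is finite, $S_{\alpha,\beta}\setminus\{0\}$ is then open (its complement is a finite union of closed blocks) and $B^0_{\lambda}(S)$ carries the weak topology determined by the finite closed cover by blocks. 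With that in hand the sets in (i) and (ii) are genuinely $\tau_B$-open and the uniqueness argument closes; note this is a second, independent use of the finiteness of $\lambda$, besides the finite intersection $\bigcap_{\alpha,\beta}U_{\alpha,\beta}(0_S)$ you already flagged.

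A smaller point concerns the existence half: the ``uniformity across output index pairs'' you invoke for products landing on $0$ does not in fact require $\lambda$ to be finite. A single $V(0_S)$ with $V(0_S)\cdot V(0_S)\subseteq U(0_S)$ automatically serves every pair $(\alpha,\beta)$, because a basic neighbourhood of $0$ is by definition the union over \emph{all} pairs of copies of the \emph{same} $U(0_S)$; indeed the base $\mathscr{B}_B$ yields a Hausdorff semigroup topology restricting to $\tau$ on each block even for infinite $\lambda$. Finiteness is what fails for uniqueness when $\lambda$ is infinite (there are then many semigroup topologies on $B^0_{\lambda}(S)$ inducing $\tau$ on a block), not what rescues continuity at zero. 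So your existence verification is correct, but the role you assign to the finiteness hypothesis there is misplaced; its real work is done in the two places indicated above.
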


Proposition~\ref{proposition4-2} describes the structures of
countably compact Brandt $\lambda^0$-extensions of topological
monoids.

\begin{proposition}\label{proposition4-2}
A topological Brandt $\lambda^0$-extension $B^0_{\lambda}(S)$ of a
topological monoid $(S,\tau)$ with zero is countably compact if
and only if the cardinal $\lambda\geqslant 1$ is finite and
$(S,\tau)$ is a countably compact topological semigroup. Moreover,
for any countably compact topological monoid $(S,\tau)$ with zero
and for any finite cardinal $\lambda\geqslant 1$ there exists an
unique countably compact topological Brandt $\lambda^0$-extension
$\left(B^0_{\lambda}(S), \tau_{B}\right)$ and the topology
$\tau_{B}$ generated by the base
$\mathscr{B}_B=\bigcup\{\mathscr{B}_B(t)\mid t\in
B^0_{\lambda}(S)\}$, where:
\begin{itemize}
    \item[$(i)$] $\mathscr{B}_B(t)=\{(U(s))_{\alpha,\beta}
    \setminus\{ 0_S\}\mid U(s)\in\mathscr{B}_S(s)\}$, when
    $t=(\alpha,s,\beta)$ is a non-zero element of
    $B^0_{\lambda}(S)$, $\alpha,\beta\in\lambda$;
    \item[$(ii)$] $\mathscr{B}_B(0)=\{\bigcup_{\alpha,\beta\in
    \lambda}
    (U(0_S))_{\alpha,\beta}\mid U(0_S)\in\mathscr{B}_S(0_S)\}$,
    when $0$ is the zero of $B^0_{\lambda}(S)$,
\end{itemize}
and $\mathscr{B}_S(s)$ is a base of the topology $\tau$ at the
point $s\in S$.
\end{proposition}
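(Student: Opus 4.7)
The plan is to split the biconditional into two implications, then invoke Lemma~\ref{lemma4-1a} for the ``moreover'' part.

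For the forward direction, assume $(B^0_{\lambda}(S),\tau_B)$ is countably compact. I first argue that $\lambda$ must be finite. The subset $B^0_{\lambda}(1)=\{0\}\cup\{(\alpha,1_S,\beta)\mid \alpha,\beta\in\lambda\}$ is a subsemigroup of $B^0_{\lambda}(S)$ algebraically isomorphic to the semigroup $B_\lambda$ of $\lambda\times\lambda$-matrix units (as recorded in the introduction). Hence $B_\lambda$ embeds as a subsemigroup of the countably compact topological semigroup $(B^0_{\lambda}(S),\tau_B)$; Theorem~\ref{theorem1.6} then forces $\lambda$ to be finite. Next, to extract countable compactness of $(S,\tau)$, fix the index $\alpha$ provided by Definition~\ref{definition4-1} with $\tau_B|_{S_{\alpha,\alpha}}=\tau$, and set $e=(\alpha,1_S,\alpha)$. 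A direct computation in $B^0_{\lambda}(S)$ shows
\begin{equation*}
S_{\alpha,\alpha}=\{x\in B^0_{\lambda}(S)\mid e\cdot x\cdot e=x\},
\end{equation*}
since $(\alpha,s,\alpha)$ always satisfies the equation while any $(\beta,s,\gamma)$ with $\beta\ne\alpha$ or $\gamma\ne\alpha$ gives $exe=0\ne x$. This is the equalizer of two continuous maps into a Hausdorff topological semigroup, so $S_{\alpha,\alpha}$ is closed in $B^0_{\lambda}(S)$. Being a closed subspace of a countably compact space, $S_{\alpha,\alpha}$ is countably compact, and the identification $\tau_B|_{S_{\alpha,\alpha}}=\tau$ transfers countable compactness to $(S,\tau)$.

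For the reverse direction, assume $\lambda$ is finite and $(S,\tau)$ is countably compact. Lemma~\ref{lemma4-1a} already provides a unique topological Brandt $\lambda^0$-extension with the base $\mathscr{B}_B$ described in the statement, so it remains to verify that this unique $(B^0_{\lambda}(S),\tau_B)$ is countably compact. I would introduce the surjection $\Phi\colon(\lambda\times\lambda)\times S\to B^0_{\lambda}(S)$ defined by $\Phi(\alpha,\beta,s)=(\alpha,s,\beta)$ for $s\ne 0_S$ and $\Phi(\alpha,\beta,0_S)=0$, with $\lambda\times\lambda$ carrying the discrete topology. A quick check against the base in Lemma~\ref{lemma4-1a} shows $\Phi$ is continuous: preimages of basic neighbourhoods of non-zero points are products of singletons with $\tau$-neighbourhoods, while the preimage of $\bigcup_{\gamma,\delta}(U(0_S))_{\gamma,\delta}$ is exactly $(\lambda\times\lambda)\times U(0_S)$. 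Since $\lambda\times\lambda$ is finite discrete and $S$ is countably compact, the product $(\lambda\times\lambda)\times S$ is countably compact; as continuous images of countably compact spaces are countably compact, so is $(B^0_{\lambda}(S),\tau_B)$.

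The main technical point is the clean identification of the copy of $(S,\tau)$ sitting inside $B^0_{\lambda}(S)$: the equalizer description $S_{\alpha,\alpha}=\{x\mid exe=x\}$ is what makes closedness immediate, and it relies on having the unit $1_S$ at hand to form the idempotent $e$, together with the standing Hausdorff hypothesis on topological semigroups. The finiteness of $\lambda$ in the backward direction is also essential: it is what guarantees that the parameter space $\lambda\times\lambda$ in the surjection $\Phi$ contributes no obstruction to countable compactness, and without it the argument breaks down—consistently with the forward direction, where Theorem~\ref{theorem1.6} rules out infinite $\lambda$.
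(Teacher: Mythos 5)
Your proof is correct and follows essentially the same route as the paper: finiteness of $\lambda$ from the non-embeddability of infinite $B_\lambda$ into countably compact semigroups, countable compactness of $(S,\tau)$ via the copy $S_{\alpha,\alpha}=(\alpha,1_S,\alpha)B^0_{\lambda}(S)(\alpha,1_S,\alpha)$, the converse from writing $B^0_{\lambda}(S)$ as a finite countably compact union (your surjection $\Phi$ is just an explicit form of this), and Lemma~\ref{lemma4-1a} for the ``moreover'' part. The only difference is cosmetic: where the paper cites Theorem~1.7(e) of \cite{CHK} and Lemma~1 of \cite{GutikPavlyk2006}, you supply the direct equalizer and continuous-image arguments, which is perfectly fine.
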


\begin{proof}
Since by Theorem~\ref{theorem1.7} the infinite semigroup of matrix
units does not embed into a countably compact topological
semigroup, the countable compactness of the topological Brandt
$\lambda^0$-extension $\left(B^0_{\lambda}(S), \tau_{B}\right)$ of
a topological semigroup $(S,\tau)$ implies that the cardinal
$\lambda$ is finite. Then by Theorem~1.7(e) of~\cite[Vol.~1]{CHK},
$(\alpha,1_S,\alpha)B^0_{\lambda}(S)(\alpha,1_S,\alpha)=S_{\alpha,\alpha}$
is a countably compact semigroup for any $\alpha\in\lambda$, and
hence $(S,\tau)$ is a countably compact topological semigroup. The
converse follows from Lemma~1~\cite{GutikPavlyk2006} and the
assertion that the finite union of countably compact spaces is a
countably compact space.

Lemma~\ref{lemma4-1a} implies the last assertion of the
proposition.
\end{proof}

Lemma~\ref{lemma4-1a} and Proposition~\ref{proposition4-2} imply

\begin{theorem}\label{theorem4.2a}
Every countably compact (and hence compact) topological Brandt
$\lambda^0$-extension \break $\left(B^0_{\lambda}(S),
\tau_{B}\right)$ of a topological inverse semigroup $(S,\tau)$ is
a topological inverse semigroup.
\end{theorem}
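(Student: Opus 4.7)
The plan is to reduce the theorem immediately to the case of finite $\lambda$ via Proposition~\ref{proposition4-2}, then verify the continuity of inversion directly using the explicit base furnished by Lemma~\ref{lemma4-1a}. Concretely, since $\left(B^0_{\lambda}(S), \tau_{B}\right)$ is countably compact, Proposition~\ref{proposition4-2} forces the cardinal $\lambda$ to be finite and identifies $\tau_B$ with the topology whose base $\mathscr{B}_B$ is the one displayed in (i) and (ii) of that proposition. Multiplication is continuous by the definition of a topological Brandt $\lambda^0$-extension, so the only thing left to prove is the continuity of the map $\operatorname{inv}\colon B^0_{\lambda}(S)\to B^0_{\lambda}(S)$ sending $(\alpha,s,\beta)\mapsto(\beta,s^{-1},\alpha)$ and $0\mapsto 0$, where $s^{-1}$ refers to the inversion in the topological inverse semigroup $(S,\tau)$.

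First I would check continuity at a non-zero point $t=(\alpha,s,\beta)$. A basic neighbourhood of $t^{-1}=(\beta,s^{-1},\alpha)$ has the form $(U(s^{-1}))_{\beta,\alpha}\setminus\{0_S\}$ for some $U(s^{-1})\in\mathscr{B}_S(s^{-1})$. Since inversion on $S$ is continuous, there exists $V(s)\in\mathscr{B}_S(s)$ with $(V(s))^{-1}\subseteq U(s^{-1})$; then the basic open set $(V(s))_{\alpha,\beta}\setminus\{0_S\}$ is a neighbourhood of $t$ whose image under $\operatorname{inv}$ lies in $(U(s^{-1}))_{\beta,\alpha}\setminus\{0_S\}$, as required.

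Next I would check continuity at $0$. A basic neighbourhood has the form $\bigcup_{\alpha,\beta\in\lambda}(U(0_S))_{\alpha,\beta}$ for some $U(0_S)\in\mathscr{B}_S(0_S)$. Using continuity of inversion on $S$ at $0_S$ (and the fact that $0_S^{-1}=0_S$), choose $V(0_S)\in\mathscr{B}_S(0_S)$ with $(V(0_S))^{-1}\subseteq U(0_S)$; then the basic neighbourhood $\bigcup_{\alpha,\beta\in\lambda}(V(0_S))_{\alpha,\beta}$ of $0$ is carried by $\operatorname{inv}$ into $\bigcup_{\beta,\alpha\in\lambda}(U(0_S))_{\beta,\alpha}=\bigcup_{\alpha,\beta\in\lambda}(U(0_S))_{\alpha,\beta}$, which uses crucially that the union ranges over all pairs of indices, absorbing the swap $(\alpha,\beta)\mapsto(\beta,\alpha)$ induced by the Brandt inversion.

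There is no real obstacle here; the argument is essentially a matter of transporting the continuity of inversion from $(S,\tau)$ through the explicit description of $\tau_B$. The only conceptual point is to observe that reducing to finite $\lambda$ via Proposition~\ref{proposition4-2} is what legitimises the use of the uniqueness statement in Lemma~\ref{lemma4-1a}, without which one could not assume the topology $\tau_B$ has the convenient basis presented above. The compact case is then subsumed automatically, since a compact space is countably compact.
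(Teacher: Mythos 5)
Your proposal is correct and takes essentially the same route as the paper: the paper's entire proof is the remark that Lemma~\ref{lemma4-1a} and Proposition~\ref{proposition4-2} imply the theorem, and your argument simply writes out the implicit verification, namely reducing to finite $\lambda$ and checking continuity of $(\alpha,s,\beta)\mapsto(\beta,s^{-1},\alpha)$ against the explicit base, including the correct observation that the union over all index pairs at $0$ absorbs the swap $(\alpha,\beta)\mapsto(\beta,\alpha)$.
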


\begin{definition}[\cite{GutikRepovs}]\label{definition4-3}
Let $\lambda$ be any cardinal $\geqslant 2$. We shall say that a
semigroup $S$ has the \emph{$\mathcal{B}_{\lambda}^*$-pro\-perty}
if $S$ satisfies the following conditions:
\begin{itemize}
    \item[1)] $T$ does not contain the semigroup of
    $\lambda\times\lambda$-matrix units;
    \item[2)] $T$ does not contain the semigroup of $2\times 2$-matrix
    units $B_2$ such that the zero of $B_2$ is the zero of $T$.
\end{itemize}
\end{definition}

Gutik and Repov\v{s} in \cite{GutikRepovs} proved the following:

\begin{theorem}[\cite{GutikRepovs}]\label{theorem4-4}
Let $\lambda_1$ and $\lambda_2$ be any finite cardinals such that
$\lambda_2\geqslant\lambda_1\geqslant 1$. Let $B_{\lambda_1}^0(S)$
and $B_{\lambda_2}^0(T)$ be topological Brandt $\lambda_1^0$- and
$\lambda_2^0$-extensions of topological monoids $S$ and $T$ with
zero, respectively. Let $h\colon S\rightarrow T$ be a continuous
homomorphism such that $(0_S)h=0_T$ and $\varphi\colon
{\lambda_1}\rightarrow{\lambda_2}$ an one-to-one map. Let $e$ be a
non-zero idempotent of $T$, $H_e$ a maximal subgroup of $T$ with
unity $e$ and $u\colon{\lambda_1}\rightarrow H_e$ a map. Then
$I_h=\{ s\in S\mid (s)h=0_T\}$ is a closed ideal of $S$ and the
map $\sigma\colon B_{\lambda_1}^0(S)\rightarrow
B_{\lambda_2}^0(T)$ defined by the formulae
\begin{equation*}
    ((\alpha,s,\beta))\sigma=
    \left\{%
\begin{array}{cl}
    ((\alpha)\varphi,(\alpha)u\cdot(s)h\cdot((\beta)u)^{-1},(\beta)\varphi),
    & \hbox{if}\quad s\notin S\setminus I_h ;\\
    0_2, & \hbox{if}\quad s\in I_h^*,\\
\end{array}%
\right.
\end{equation*}
and $(0_1)\sigma=0_2$, is a non-trivial continuous homomorphism
from $B_{\lambda_1}^0(S)$ into $B_{\lambda_2}^0(T)$. Moreover if
for the semigroup $T$ the following conditions hold:
\begin{itemize}
    \item[($i$)] every idempotent of $T$ lies in the center of
    $T$;
    \item[($ii$)] $T$ has $\mathcal{B}_{\lambda_1}^*$-property,
\end{itemize}
then every non-trivial continuous homomorphism from
$B_{\lambda_1}^0(S)$ into $B_{\lambda_2}^0(T)$ can be so
constructed.
\end{theorem}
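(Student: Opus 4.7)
The plan is to treat the two halves of the statement separately: first verify that the prescribed formula defines a non-trivial continuous homomorphism $\sigma$, and then prove the converse --- that under conditions~$(i)$ and $(ii)$ on $T$ every non-trivial continuous homomorphism $B^0_{\lambda_1}(S)\to B^0_{\lambda_2}(T)$ arises this way.

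For the forward direction, I would begin by observing that $I_h=\{s\in S\mid(s)h=0_T\}$ is a two-sided ideal of $S$ (because $h$ is a homomorphism and $(0_S)h=0_T$) and closed (since $\{0_T\}$ is closed in the Hausdorff space $T$ and $h$ is continuous). Multiplicativity of $\sigma$ reduces to a case analysis on a product $(\alpha,s,\beta)\cdot(\gamma,t,\delta)$: both sides vanish unless $\beta=\gamma$ and $st\notin I_h$, in which case the middle-slot computation collapses to
\[
(\alpha)u\cdot(s)h\cdot\bigl[((\beta)u)^{-1}\cdot(\beta)u\bigr]\cdot(t)h\cdot((\delta)u)^{-1}=(\alpha)u\cdot(s)h\cdot e\cdot(t)h\cdot((\delta)u)^{-1},
\]
which must be shown to equal $(\alpha)u\cdot(st)h\cdot((\delta)u)^{-1}$; the absorption of the idempotent $e$ against the image of $h$ is the small algebraic point to verify. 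Injectivity of $\varphi$ matches the remaining zero cases between the two sides. Continuity follows directly from Lemma~\ref{lemma4-1a}: a basic neighborhood of $\sigma(\alpha,s,\beta)$ of the form $(U)_{(\alpha)\varphi,(\beta)\varphi}\setminus\{0_T\}$ pulls back, via continuity of $h$ and fixed two-sided multiplications by $(\alpha)u$ and $((\beta)u)^{-1}$, to a basic neighborhood of $(\alpha,s,\beta)$ in $B^0_{\lambda_1}(S)$; the neighborhood base at $0_2$ pulls back to one at $0_1$. Non-triviality is immediate, since $\sigma$ sends every $(\alpha,s,\beta)$ with $s\notin I_h$ to a non-zero element.

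For the converse, let $\sigma\colon B^0_{\lambda_1}(S)\to B^0_{\lambda_2}(T)$ be any non-trivial continuous homomorphism. The plan is to extract the data $(\varphi,e,u,h)$ from the images of the primitive idempotents $f_\alpha=(\alpha,1_S,\alpha)$ and of the triples $(\alpha,1_S,\beta)$. Condition~$(ii)$ --- the $\mathcal{B}_{\lambda_1}^*$-property of $T$ --- prevents $\{(f_\alpha)\sigma\}_{\alpha\in\lambda_1}$, together with the off-diagonal images $((\alpha,1_S,\beta))\sigma$, from realizing a proper $\lambda_1\times\lambda_1$ matrix-unit subsemigroup of $T$ that shares the zero of $T$. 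This forces every $(f_\alpha)\sigma$ to lie in a common $\mathcal{H}$-class $H_e$ for a single non-zero idempotent $e\in T$, and simultaneously delivers an injection $\varphi\colon\lambda_1\to\lambda_2$. Condition~$(i)$ (every idempotent of $T$ lies in the centre) then makes the ``conjugation by $u$'' picture globally consistent across index pairs, so that a well-defined map $u\colon\lambda_1\to H_e$ can be read off from $\sigma(\alpha,1_S,\beta)=((\alpha)\varphi,(\alpha)u\cdot((\beta)u)^{-1},(\beta)\varphi)$, and $h$ can be recovered as the monoid homomorphism obtained by restricting $\sigma$ to a diagonal block $S_{\alpha,\alpha}\cong S$ and conjugating back by $(\alpha)u$. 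A final direct check verifies that the original formula reproduces $\sigma$.

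The main obstacle is the reconstruction step in the converse, and specifically the use of the $\mathcal{B}_{\lambda_1}^*$-property: one must rule out ``rogue'' homomorphisms whose images on $B^0_{\lambda_1}(1)$ assemble a genuine matrix-unit configuration inside $T$ (with varying primitive idempotents rather than a single $e$), which is exactly what condition~$(ii)$ forbids. The centrality condition~$(i)$ is then indispensable to guarantee that a single $e$ together with $H_e$ and the map $u$ suffice --- without it, the values of $u$ read off from different index pairs could fail to match, and the putative block homomorphism $h$ might be ill-defined. Once these two structural points are settled, the remaining verifications (ideal and closure properties of $I_h$, the case-by-case multiplicativity, and continuity via Lemma~\ref{lemma4-1a}) are routine.
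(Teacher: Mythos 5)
The paper itself offers no proof of this statement: Theorem~\ref{theorem4-4} is quoted verbatim from \cite{GutikRepovs} (``Gutik and Repov\v{s} in \cite{GutikRepovs} proved the following''), so there is no in-paper argument to compare yours against, and I can only assess your outline on its own terms. Your two-part plan does follow the natural route, but it contains one genuine gap in the direct part and leaves the converse as a plan rather than a proof.

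The gap: multiplicativity of $\sigma$ reduces, exactly as you write, to the identity $(s)h\cdot e\cdot(t)h=(s)h\cdot(t)h=(st)h$, and you dismiss the absorption of $e$ as ``the small algebraic point to verify.'' It is not verifiable from the hypotheses as stated: $e$ is an arbitrary non-zero idempotent of $T$ with no stated relation to $h$, and nothing forces $e$ to act as an identity on $h(S)$. Concretely, let $T=\{0_T,e,f,1_T\}$ be the semilattice with $ef=0_T$, let $S=\{0_S,1_S\}$ and $(1_S)h=f$; then $(\alpha)u\cdot(s)h\cdot((\beta)u)^{-1}=efe=0_T$, the middle entries all collapse to $0_T$, and $\sigma$ is the trivial homomorphism --- so both multiplicativity-as-you-compute-it and the claimed non-triviality fail. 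What is needed (and is implicit in the source, where $h$ is ultimately recovered in the form $s\mapsto((\alpha)u)^{-1}\cdot(\,\cdot\,)\cdot(\alpha)u\in eTe$) is that $e$ be a two-sided identity for $h(S)$, e.g.\ $h(S)\subseteq eTe$ or $(1_S)h=e$; you must state and invoke this, not treat it as routine. On the converse, your structural picture is also slightly off: the images of the diagonal idempotents $(\alpha,1_S,\alpha)$ are idempotents of $B^0_{\lambda_2}(T)$, hence of the form $((\alpha)\varphi,e_\alpha,(\alpha)\varphi)$ sitting in (a priori different) blocks; congruence-freeness of $B_{\lambda_1}$ forces $\sigma$ to be injective on the matrix-unit skeleton once it is non-trivial, and the $\mathcal{B}^*_{\lambda_1}$-property is what rules out two diagonal images sharing a block or the whole skeleton collapsing into one copy of $T$, which yields the injection $\varphi$; it is then centrality of idempotents that forces all the $e_\alpha$ to equal a single $e$ and places the middle entries of $((\alpha,1_S,\beta))\sigma$ in $H_e$, from which $u$ and $h$ are read off. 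As written, your converse asserts these conclusions of conditions $(i)$ and $(ii)$ rather than deriving them.
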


Next we define a category of countably compact topological monoids
and pairs of finite sets and a category of countably compact
topological semigroups.

Let $S$ and $T$ be countably compact topological monoids with
zeros. Let $\texttt{CHom}\,_0(S,T)$ be a set of all continuous
homomorphisms $\sigma\colon S\rightarrow T$ such that
$(0_S)\sigma=0_T$. We put
\begin{equation*}
    \mathbf{E}^{\textit{top}}_1(S,T)=\{e\in E(T)\mid \hbox{there exists~}
    \sigma\in
    \texttt{CHom}\,_0(S,T) \hbox{~such that~} (1_S)\sigma=e\}
\end{equation*}
and define the family
\begin{equation*}
    \mathscr{H}^{\textit{top}}_1(S,T)=\{ H(e)\mid
    e\in\mathbf{E}^{\textit{top}}_1(S,T)\},
\end{equation*}
where by $H(e)$ we denote the maximal subgroup with the unity $e$
in the semigroup $T$. Also by $\mathfrak{CCTB}$ we denote the
class of all countably compact topological monoids $S$ with zero
such that $S$ has $\mathcal{B}^*$-property and every idempotent of
$S$ lies in the center of $S$.

We define a category $\mathscr{TCC\!B}_{\operatorname{fin}}$ as
follows:
\begin{itemize}
    \item[$(i)$] $\operatorname{\textbf{Ob}}(\mathscr{TCC\!B}_{\operatorname{fin}})=
    \{(S,I)\mid S\in\mathfrak{CCTB} \textrm{~and~} I \textrm{~is a
    finite set}\}$, and if $S$ is a trivial
    semigroup then we identify $(S,I)$ and $(S,J)$ for all finite
    sets $I$ and $J$;
    \item[$(ii)$] $\operatorname{\textbf{Mor}}(\mathscr{TCC\!B}_{\operatorname{fin}})$
    consists of triples $(h,u,\varphi)\colon
    (S,I)\rightarrow(S^{\,\prime},I^{\,\prime})$, where
\begin{equation*}\label{eq4-1}
\begin{split}
    & h\colon S\rightarrow S^{\,\prime} \textrm{~is a continuous
      homomorphism such that~}
      h\in \texttt{CHom}\,_0(S,S^{\,\prime}),\\
    & u\colon I\rightarrow H(e) \textrm{~is a map~}, \textrm{~for~} H(e)\in
      \mathscr{H}^{\textit{top}}_1(S,S^{\,\prime}),\\
    & \varphi\colon I\rightarrow I^{\,\prime} \textrm{~is an
    one-to-one map},
\end{split}
\end{equation*}
with the composition
\begin{equation*}\label{eq3-2}
    (h,u,\varphi)(h^{\,\prime},u^{\,\prime},\varphi^{\,\prime})=
    (hh^{\,\prime},[u,\varphi,h^{\,\prime},u^{\,\prime}],\varphi\varphi^{\,\prime}),
\end{equation*}
where the map $[u,\varphi,h^{\,\prime},u^{\,\prime}]\colon
I\rightarrow H(e)$ is defined by the formula
\begin{equation*}\label{eq3-3}
    (\alpha)[u,\varphi,h^{\,\prime},u^{\,\prime}]=
    ((\alpha)\varphi)u^{\,\prime}\cdot((\alpha)u)h^{\,\prime}
    \qquad \textrm{~for~} \alpha\in I.
\end{equation*}
\end{itemize}
Straightforward verification shows that
$\mathscr{TCC\!B}_{\operatorname{fin}}$ is the category with the
identity morphism
$\varepsilon_{(S,I)}=(\operatorname{Id}_S,u_0,\operatorname{Id}_I)$
for any
$(S,I)\in\operatorname{\textbf{Ob}}(\mathscr{TCC\!B}_{\operatorname{fin}})$,
where $\operatorname{Id}_S\colon S\rightarrow S$ and
$\operatorname{Id}_I\colon I\rightarrow I$ are identity maps and
$(\alpha)u_0=1_S$ for all $\alpha\in I$.

We define a category
$\mathscr{B}^*_{\operatorname{fin}}(\mathscr{TCC\!S})$ as follows:
\begin{itemize}
    \item[$(i)$]
    let
    $\operatorname{\textbf{Ob}}(\mathscr{B}^*_{\operatorname{fin}}(\mathscr{TCC\!S}))$
    be all finite topological Brandt $\lambda^0$-extensions of
    countably topological monoids $S$ with zeros
    such that $S$ has $\mathcal{B}^*$-property and every
    idempotent of $S$ lies in the center of $S$;
    \item[$(ii)$]
    let
    $\operatorname{\textbf{Mor}}(\mathscr{B}^*_{\operatorname{fin}}(\mathscr{TCC\!S}))$
    be homomorphisms of finite topological Brandt
    $\lambda^0$-extensions of countably compact topological
    monoids $S$ with zeros such that $S$ has
    $\mathcal{B}^*$-property and every idempotent of $S$ lies in
    the center of $S$.
\end{itemize}

For each
$(S,I_{\lambda_1})\in\operatorname{\textbf{Ob}}(\mathscr{TCC\!B}_{\operatorname{fin}})$
with non-trivial $S$, let
$\textbf{B}(S,I_{\lambda_1})=B_{\lambda_1}^0(S)$ be the countably
compact topological Brandt $\lambda^0$-extension of the countably
compact topological monoid $S$. For each
$(h,u,\varphi)\in\operatorname{\textbf{Mor}}(\mathscr{TCC\!B}_{\operatorname{fin}})$
with a non-trivial continuous homomorphism $h$, where
$(h,u,\varphi)\colon$
$(S,I_{\lambda_1})\rightarrow(T,I_{\lambda_2})$ and
$(T,I_{\lambda_2})\in\operatorname{\textbf{Ob}}(\mathscr{TCC\!B}_{\operatorname{fin}})$,
we define a map $\textbf{B}{(h,u,\varphi)}\colon
\textbf{B}(S,I_{\lambda_1})=B_{\lambda_1}^0(S)\rightarrow
\textbf{B}(T,I_{\lambda_2})=B_{\lambda_2}^0(T)$ as follows:
\begin{equation*}\label{functor_B}
    ((\alpha,s,\beta))[\textbf{B}{(h,u,\varphi)}]=
    \left\{%
\begin{array}{cl}
    ((\alpha)\varphi,(\alpha)u\cdot(s)h\cdot((\beta)u)^{-1},(\beta)\varphi),
    & \hbox{if}\quad s\notin S\setminus I_h ;\\
    0_2, & \hbox{if}\quad s\in I_h^*,\\
\end{array}%
\right.
\end{equation*}
and $(0_1)[\textbf{B}{(h,u,\varphi)}]=0_2$, where $I_h=\{ s\in
S\mid (s)h=0_T\}$ is a closed ideal of $S$ and $0_1$ and $0_2$ are
the zeros of the semigroups $B_{\lambda_1}^0(S)$ and
$B_{\lambda_2}^0(T)$, respectively. For each
$(h,u,\varphi)\in\operatorname{\textbf{Mor}}(\mathscr{TCC\!B}_{\operatorname{fin}})$
with a trivial homomorphism $h$ we define a map
$\textbf{B}{(h,u,\varphi)}\colon
\textbf{B}(S,I_{\lambda_1})=B_{\lambda_1}^0(S)\rightarrow
\textbf{B}(T,I_{\lambda_2})=B_{\lambda_2}^0(T)$ as follows:
$(a)[\textbf{B}{(h,u,\varphi)}]=0_2$ for all $a\in
\textbf{B}(S,I_{\lambda_1})=B_{\lambda_1}^0(S)$. If $S$ is a
trivial semigroup then we define $\textbf{B}(S,I_{\lambda_1})$ to
be a trivial semigroup.

A functor $\textbf{F}$ from a category $\mathscr{C}$ into a
category $\mathscr{K}$ is called \emph{full} if for any
$a,b\in\operatorname{\textbf{Ob}}(\mathscr{C})$ and for any
$\mathscr{K}$-morphism $\alpha\colon \textbf{F}a\rightarrow
\textbf{F}b$ there exists a $\mathscr{C}$-morphism $\beta\colon
a\rightarrow b$ such that $\textbf{F}\beta=\alpha$, and
$\textbf{F}$ called \emph{representative} if for any
$a\in\operatorname{\textbf{Ob}}(\mathscr{K})$ there exists
$b\in\operatorname{\textbf{Ob}}(\mathscr{C})$ such that $a$ and
$\textbf{F}b$ are isomorphic.

Theorem~4.1 \cite{GutikRepovs} and Theorem~\ref{theorem4-4} imply

\begin{theorem}\label{theorem4-15}
$\operatorname{\textbf{B}}$ is a full representative functor from
$\mathscr{TCC\!B}_{\operatorname{fin}}$ into
$\mathscr{B}^*_{\operatorname{fin}}(\mathscr{TCC\!S})$.
\end{theorem}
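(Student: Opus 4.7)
The plan is to establish three things: that $\operatorname{\textbf{B}}$ is indeed a functor, that it is representative, and that it is full, using Proposition~\ref{proposition4-2} for the first two and Theorem~\ref{theorem4-4} for the third.

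First I would verify functoriality. By Lemma~\ref{lemma4-1a}, the topology on $B^0_{\lambda_1}(S)$ used by $\operatorname{\textbf{B}}$ is the unique one making $\left(B^0_{\lambda}(S), \tau_{B}\right)$ a topological Brandt $\lambda^0$-extension when $\lambda$ is finite, so $\operatorname{\textbf{B}}{(h,u,\varphi)}$ lands in the correct object. Continuity of $\operatorname{\textbf{B}}{(h,u,\varphi)}$ follows from the continuity of $h$ together with the description of the basic neighbourhoods in Proposition~\ref{proposition4-2}(i)--(ii); homomorphism property is direct from the definitions. Preservation of identities is immediate from the definition of $\varepsilon_{(S,I)}$: with $h=\operatorname{Id}_S$, $\varphi=\operatorname{Id}_I$ and $(\alpha)u_0=1_S$, the formula for $\operatorname{\textbf{B}}{(h,u,\varphi)}$ becomes the identity map on $B^0_\lambda(S)$. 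For preservation of composition one has to check on a general non-zero element $(\alpha,s,\beta)$ that
\begin{equation*}
((\alpha,s,\beta))\bigl[\operatorname{\textbf{B}}{(hh',[u,\varphi,h',u'],\varphi\varphi')}\bigr]=((\alpha,s,\beta))\bigl[\operatorname{\textbf{B}}{(h,u,\varphi)}\cdot\operatorname{\textbf{B}}{(h',u',\varphi')}\bigr],
\end{equation*}
which after unravelling the definitions reduces to the identity $((\alpha)\varphi)u'\cdot((\alpha)u)h'=(\alpha)[u,\varphi,h',u']$ (and its dual on the right coordinate), and to compatibility of the $I_h$--absorbing cases; this is the precise reason the composition in $\mathscr{TCC\!B}_{\operatorname{fin}}$ is defined by the cocycle-like formula $[u,\varphi,h',u']$.

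Next, representativity. Let $\left(B^0_{\lambda}(S),\tau_B\right)\in\operatorname{\textbf{Ob}}(\mathscr{B}^*_{\operatorname{fin}}(\mathscr{TCC\!S}))$. By definition of the target category, $\lambda$ is finite, $S$ is a countably compact topological monoid with zero that has $\mathcal{B}^*$-property and central idempotents, so $S\in\mathfrak{CCTB}$ and $(S,I_\lambda)\in\operatorname{\textbf{Ob}}(\mathscr{TCC\!B}_{\operatorname{fin}})$. Proposition~\ref{proposition4-2} shows that the countably compact topology on $B^0_\lambda(S)$ is unique, hence $\operatorname{\textbf{B}}(S,I_\lambda)$ is isomorphic (in fact equal) to the given object.

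Finally, fullness. Let $\sigma\colon\operatorname{\textbf{B}}(S,I_{\lambda_1})\to\operatorname{\textbf{B}}(T,I_{\lambda_2})$ be a morphism in $\mathscr{B}^*_{\operatorname{fin}}(\mathscr{TCC\!S})$, i.e.\ a continuous homomorphism $B^0_{\lambda_1}(S)\to B^0_{\lambda_2}(T)$. If $\sigma$ is trivial (annihilating) we realise it by the triple whose first component is the zero-homomorphism $S\to T$; by the second clause in the definition of $\operatorname{\textbf{B}}{(h,u,\varphi)}$ on trivial $h$, this maps to $\sigma$. If $\sigma$ is non-trivial, the target monoid $T$ is in $\mathfrak{CCTB}$, so it satisfies hypotheses $(i)$ (central idempotents) and $(ii)$ ($\mathcal{B}^*_{\lambda_1}$-property) of Theorem~\ref{theorem4-4}; that theorem then supplies continuous $h\colon S\to T$ with $(0_S)h=0_T$, an injection $\varphi\colon I_{\lambda_1}\to I_{\lambda_2}$ and a map $u\colon I_{\lambda_1}\to H_e$ such that the homomorphism constructed from $(h,u,\varphi)$ equals $\sigma$. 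That construction is by definition $\operatorname{\textbf{B}}{(h,u,\varphi)}$, so $\sigma$ lifts.

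The main obstacle will be the compositional check in the functoriality step: one must carefully track how the factor $(\alpha)u$ interacts with $h'$ and how $((\alpha)\varphi)u'$ enters, and verify that the $I_h$-absorbing branches of the definition compose correctly so that $s\in I_{hh'}^{*}$ is sent to $0_2$ by both sides. Once the algebraic identity for $[u,\varphi,h',u']$ is verified on non-zero elements, the remaining cases are routine. Fullness and representativity are then essentially translations of Theorem~\ref{theorem4-4} and Proposition~\ref{proposition4-2} into the language of the two categories.
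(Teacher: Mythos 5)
Your proposal is correct and follows essentially the same route as the paper: the paper's proof is a one-line citation of Theorem~4.1 of \cite{GutikRepovs} (the functoriality/representativity content you verify directly, together with the uniqueness of the finite topological Brandt $\lambda^0$-extension from Lemma~\ref{lemma4-1a} and Proposition~\ref{proposition4-2}) and of Theorem~\ref{theorem4-4} (which, exactly as you argue, yields fullness since every object's underlying monoid lies in $\mathfrak{CCTB}$ and hence satisfies the hypotheses on $T$). You have merely unpacked those citations into an explicit verification.
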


\begin{remark}
We observe that the similar statements to
Theorem~\ref{theorem4-15} hold for the categories of countably
compact topological inverse monoids, countably compact Clifford
topological inverse monoids, countably compact Brandt topological
semigroups, countably compact topological semilattices and finite
sets and corresponding their countably compact topological Brandt
$\lambda^0$-extensions of countably compact topological monoids.
Moreover in the case of countably compact topological semilattices
the functor $\operatorname{\textbf{B}}$ determines the equivalency
of such categories. The last assertion follows from
Proposition~4.3~\cite{GutikRepovs}.
\end{remark}

%%%%%%%%%%%%%%%%%%%%%%%%%%%%%%%%%%%%%%%%%%%%%%%%%%%%%%%%%%%%%%%
\section*{Acknowledgements}

We thank Taras Banakh and Igor Guran for several comments. Also,
we thank the referee for several comments and remarks.

%%%%%%%%%%%%%%%%%%%%%%%%%%%%%%%%%%%%%%%%%%%%%%%%%%%%%%%%%%%%

\end{document}